\documentclass[12pt,reqno]{article}
\oddsidemargin 0pt \headheight 0pt \headsep 0pt \textwidth 165true
mm \textheight=8.9in

\usepackage{amssymb}
\usepackage{amsmath}
\usepackage{amsthm}
\usepackage{pb-diagram}
\usepackage{color}

\newcommand{\br}[3]{{$#1$}$\lower4pt\hbox{$\tp\atop\raise4pt \hbox{$\scriptscriptstyle{#2}$}$} ${$#3$}}
\newcommand{\tw}[3]{{$#1$}${\,\scriptscriptstyle {#2}}\atop\raise9pt\hbox{$\scriptstyle\tp$} ${$#3$}}
\newcommand{\ttps}[2]{{#1}\raise5pt\hbox{$\lower12pt\hbox{$\scriptstyle\tp$}\atop \lower0pt\hbox{$\tilde\;$}$}\raise4.5pt\hbox{${\scriptstyle{#2}}$}}
\newcommand{\st}[1]{\mbox{${\,\scriptscriptstyle {#1}}\atop\raise5.5pt\hbox{$*$}$}}

\newcommand{\rd}[1]{\mbox{${\,\scriptscriptstyle {#1}}\atop\raise5.5pt\hbox{$\bullet$}$}}
\newcommand{\rt}[1]{\otimes_\chi}
\newcommand{\lt}[1]{\mbox{${\,\scriptscriptstyle {#1}}\atop\raise5.5pt\hbox{$\ltimes$}$}}
\newcommand{\btr}{\raise1.2pt\hbox{$\scriptstyle\blacktriangleright$}\hspace{2pt}}
\newcommand{\btl}{\raise1.2pt\hbox{$\scriptstyle\blacktriangleleft$}\hspace{2pt}}

\newcommand{\lcr}{\raise1.0pt \hbox{${\scriptstyle\rightharpoonup}$}}
\newcommand{\rcr}{\raise1.0pt \hbox{${\scriptstyle\leftharpoonup}$}}

\newcommand{\ttp}{{\lower12pt\hbox{$\tp$}\atop \hbox{$\tilde\;$}}}

\renewcommand{\P}{\mathrm{P}}
\newcommand{\id}{\mathrm{id}}

\newcommand{\sms}{\hspace{.7pt}}

\newcommand{\sgn}{\mathrm{sign}}

\newcommand{\Ru}{\mathcal{R}}

\newcommand{\Nc}{\mathcal{N}}

\newcommand{\C}{\mathbb{C}}
\newcommand{\Z}{\mathbb{Z}}

\newcommand{\N}{\mathbb{N}}

\newcommand{\tp}{\otimes}

\newcommand{\tht}{\theta}

\newcommand{\U}{U}

\newcommand{\ve}{\varepsilon}
\newcommand{\gm}{\gamma}
\newcommand{\dt}{\delta}

\newcommand{\op}{\oplus}
\newcommand{\la}{\lambda}

\newcommand{\End}{\mathrm{End}}

\newcommand{\Span}{\mathrm{Span}}

\newcommand{\rk}{\mathrm{rk}}

\newcommand{\Rm}{\mathrm{R}}

\newcommand{\g}{\mathfrak{g}}
\renewcommand{\b}{\mathfrak{b}}

\newcommand{\h}{\mathfrak{h}}

\newcommand{\lb}{\boldsymbol{l}}

\newcommand{\s}{\mathfrak{s}}

\renewcommand{\o}{\mathfrak{o}}

\newcommand{\nn}{\nonumber}
\newcommand{\p}{\mathfrak{p}}
\renewcommand{\l}{\mathfrak{l}}

\newcommand{\si}{\sigma}
\newcommand{\al}{\alpha}

\newcommand{\bt}{\beta}

\newcommand{\prt}{\partial}

\newcommand{\be}{\begin{eqnarray}}
\newcommand{\ee}{\end{eqnarray}}

\newtheorem{thm}{Theorem}[section]
\newtheorem{propn}[thm]{Proposition}
\newtheorem{lemma}[thm]{Lemma}

\newcount\prg

\newcommand{\parag}{\advance\prg by1 {\noindent\bf\thesection.\the\prg\hspace{6pt}}}

\begin{document}
\title{R-matrix and Mickelsson algebras for orthosymplectic quantum groups}
\author{
Thomas Ashton and Andrey Mudrov
\vspace{20pt}\\
\small Department of Mathematics,\\ \small University of Leicester, \\
\small University Road,
LE1 7RH Leicester, UK\\
\small e-mail: am405@le.ac.uk\\
}

\date{}
\maketitle

\begin{abstract}
Let $\g$ be a complex  orthogonal or symplectic Lie algebra and
 $\g'\subset \g$ the Lie subalgebra of rank $\rk \>\g'=\rk \> \g-1$ of the same type.
We give an explicit construction of  generators of the Mickelsson algebra $Z_q(\g,\g')$ in terms of Chevalley generators
via the R-matrix of $U_q(\g)$.
\end{abstract}

{\small \underline{Mathematics Subject Classifications}: 81R50, 81R60, 17B37.
}

{\small \underline{Key words}: Mickelsson algebras, quantum groups, R-matrix, lowering/raising operators.
}
\newpage
\section{Introduction}
In the mathematics literature, lowering and rasing operators operators are known as generators of step algebras, which were originally
introduced by Mickelsson \cite{Mick} for reductive pairs of Lie algebras, $\g'\subset \g$. These algebras naturally act on  $\g'$-singular vectors in $U(\g)$-modules  and are important in representation theory,
\cite{Zh1,Mol}.

The general theory of step algebras for classical universal enveloping algebras was developed in \cite{Zh1,Zh2} and
was extended to the special liner and orthogonal quantum groups in \cite{Kek}. They admit a natural description in
terms of extremal projectors, \cite{Zh2}, introduced for classical groups in \cite{AST1,AST2}
and extended to the quantum group case in \cite{T}. It is known that the step algebra $Z(\g,\g')$ is generated by
the image of the orthogonal complement $\g\ominus \g'$ under the extremal projector of the $\g'$.
Another description of lowering/rasing operators for classical groups was obtained in
\cite{NM,Pei,PH,W} in an explicit form of polynomials in $\g$.

A generalization of the results of \cite{NM,Pei} to quantum $\g\l(n)$ can be found in \cite{ABHST}. In this special case, the lowering operators can be also conveniently expressed through  "modified commutators" in the Chevalley generators of $U(\g)$
with coefficients in the field of fractions of $U(\h)$. Extending
\cite{PH,W} to a general quantum group is not straightforward, since there are no
immediate candidates for the nilpotent triangular Lie subalgebras $\g_\pm $ in $U_q(\g)$. We suggest such a generalization, where the lack of
$\g_\pm $ is compensated by the entries of the universal R-matrix with one leg  projected to the natural representation.
Those entries are nicely expressed through  modified commutators in the Chevalley generators turning into elements of $\g_\pm$
in the quasi-classical limit. Their commutation relation with the Chevalley generators modify the classical commutation
relations with $\g_\pm$ in a tractable way.  This enabled us to generalize the results of \cite{NM,Pei,PH,W} and construct
generators of Mickelsson algebras for the non-exceptional quantum groups.

\subsection{Quantized universal enveloping algebra}
\label{ssecQUEA}
In this paper,  $\g$ is a complex simple Lie algebra of type $B$, $C$ or $D$.
The case of $\g\l(n)$ can be easily derived from here due to the natural inclusion $\U_q\bigl(\g\l(n)\bigr)\subset \U_q(\g)$, so we do not pay special attention to it.
We choose a Cartan subalgebra $\h\subset \g$ with the canonical inner product $(.,.)$ on $\h^*$.
By  $\Rm$ we denote the root system of $\g$ with a fixed subsystem of
positive roots $\Rm^+\subset \Rm$ and the basis of simple roots $\Pi^+\subset \Rm^+$.
For every $\la\in \h^*$ we denote by  $h_\la$  its image under the isomorphism $\h^*\simeq \h$,
that is $(\la,\bt)=\bt(h_\la)$ for all $\bt\in \h^*$.
We put $\rho=\frac{1}{2}\sum_{\al\in \Rm^+}\al $ for the Weyl vector.

Suppose that $q\in \C$ is not a root of unity. Denote by $U_q(\g_\pm)$ the $\C$-algebra generated by  $e_{\pm\al}$, $\al\in \Pi^+$, subject to the q-Serre relations
$$
\sum_{k=0}^{1-a_{ij}}(-1)^k
\left[
\begin{array}{cc}
1-a_{ij} \\
 k
\end{array}
\right]_{q_{\al_i}}
e_{\pm \al_i}^{1-a_{ij}-k}
e_{\pm \al_j}e_{\pm \al_i}^{k}
=0
,
$$
where  $a_{ij}=\frac{2(\al_i,\al_j)}{(\al_i,\al_i)}$,
$i,j=1,\ldots, n=\rk\: \g$, is the Cartan matrix, $q_{\al}= q^{\frac{(\al,\al)}{2}}$, and
$$
\left[
\begin{array}{cc}
m  \\ k
\end{array}
\right]_{q}
=
\frac{[m]_q!}{[k]_q![m-k]_q!},
\quad
[m]_q!=[1]_q\cdot [2]_q\ldots [m]_q.
$$
Here and further on, $[z]_q=\frac{q^z-q^{-z}}{q-q^{-1}}$ whenever $q^{\pm z}$ make sense.

Denote by $U_q(\h)$ the commutative $\C$-algebra generated by $q^{\pm h_\al}$, $\al\in \Pi^+$. The quantum group $U_q(\g)$ is a $\C$-algebra generated by  $U_q(\g_\pm)$ and $U_q(\h)$ subject
to the relations
$$
q^{ h_\al}e_{\pm \bt}q^{-h_\al}=q^{\pm(\al,\bt)} e_{\pm \bt},
\quad
[e_{\al},e_{-\bt}]=\delta_{\al, \bt}\frac{q^{h_{\al}}-q^{-h_{\al}}}{q_\al-q^{-1}_\al}.
$$
Remark that $\h$ is not contained in $U_q(\g)$, still it is convenient for us to keep reference to $\h$.

Fix the comultiplication in $U_q(\g)$ as in \cite{CP}:
\be
&\Delta(e_{\al})=e_{\al}\tp q^{h_{\al}} + 1\tp e_{\al},
\quad
\Delta(e_{-\al})=e_{-\al}\tp 1 + q^{-h_{\al}} \tp e_{-\al},
\nn\\&
\Delta(q^{\pm h_{\al}})=q^{\pm h_{\al}}\tp q^{\pm h_{\al}},
\nn
\ee
for all $\al \in \Pi^+$.

The subalgebras $U_q(\b_\pm)\subset U_q(\g)$ generated by $U_q(\g_\pm)$ over $U_q(\h)$ are quantized universal enveloping algebras of the
Borel subalgebras $\b_\pm=\h+\g_\pm\subset \g$.

The Chevalley  generators $e_{\al}$ can be extended to a set of higher root vectors $e_{\bt}$ for
all $\bt\in \Rm$. A normally ordered set of root vectors generate a Poincar\'{e}-Birkhoff-Witt (PBW) basis of $U_q(\g)$
over $U_q(\h)$, \cite{CP}. We will use $\g_\pm$ to denote the vector space spanned by $\{e_{\pm \bt}\}_{\bt\in \Rm^+}$.

The universal R-matrix is an element of a certain extension of $U_q(\g)\tp U_q(\g)$.
We heavily use the intertwining relation
\be
\Ru \Delta(x)= \Delta^{op}(x)\Ru,
\label{intertiner}
\ee
between the coproduct and its opposite for all $x\in U_q(\g)$.
Let $\{\ve_i\}_{i=1}^n\subset \h^*$ be the standard orthonormal basis  and $\{h_{\ve_i}\}_{i=1}^n$ the corresponding dual basis in $\h$.
The exact expression for $\Ru$ can be extracted from \cite{CP}, Theorem 8.3.9, as the ordered product
\be
\Ru= q^{\sum_{i=1}^n h_{\ve_i}\tp  h_{\ve_i}}\prod_{\bt} \exp_{q_\bt}\{(1-q_\bt^{-2})(e_\bt\tp e_{-\bt} )\} \in U_q(\b_+)\hat \tp U_q(\b_-),
\label{Rmat}
\ee
where  $\exp_{q}(x )=\sum_{k=0}^\infty q^{\frac{1}{2}k(k+1)}\frac{x^k}{[k]_q!}$.

We use the notation $e_i=e_{\al_i}$ and $f_{i}=e_{-\al_i}$ for  $\al_i\in \Pi^+$, in all cases apart
from $i=n$, $\g=\s\o(2n+1)$, where we set $f_n=[\frac{1}{2}]_q e_{-\al_n}$.
The reason for this is two-fold.
Firstly, the natural representation can be defined through the classical assignment on the generators, as given below.
Secondly,
we get rid of $q_{\al_n}=q^{\frac{1}{2}}$ and can work over $\C[q]$, as the relations involved turn into
$$
[e_{n},f_{n}]=\frac{q^{h_{\al_n}}-q^{-h_{\al_n}}}{q-q^{-1}},
$$
$$
f_{n}^3f_{n-1}-(q+1+q^{-1})f_{n}^2f_{n-1}f_{n}+(q+1+q^{-1})f_{n}f_{n-1}f_{n}^2-f_{n-1}f_{n}^3=0.
$$
It is easy to see that the square root of $q$  disappears from the corresponding factor in the presentation
(\ref{Rmat}).

In what
follows, we regard $\g\l(n)\subset \g$ to be the Lie subalgebra with the simple roots $\{\al_i\}_{i=1}^{n-1}$ and $U_q\bigl(\g\l(n)\bigr)$ the corresponding
quantum subgroup in $U_q(\g)$.

Consider the natural representation of $\g$ in the vector space $\C^N$.
We use the notation $i'=N+1-i$ for all integers $i=1,\ldots,N$. The assignment
$$
\pi(e_{i})=e_{i,i+1}\pm e_{i'-1,i'}, \quad \pi(f_{i})= e_{i+1,i}\pm e_{i',i'-1}, \quad \pi(h_{\al_i})= e_{ii}-e_{i+1,i+1}+e_{i'-1,i'-1}-e_{i'i'},
$$
for $i=1, \ldots,n-1$, defines a direct sum of two representations of $\g\l(n)$ for each sign.
It extends to the natural representation of the whole $\g$ by
$$
\pi(e_{n})= e_{n,n+1}\pm e_{n'-1,n'}, \quad \pi(f_{n})=  e_{n+1,n}\pm e_{n',n'-1}, \quad \pi(h_{\al_n})=
e_{nn}-e_{n'n'},
$$
$$
\pi(e_{n})= e_{nn'}, \quad \pi(f_{n})= e_{n'n}, \quad \pi(h_{\al_n})=
2e_{nn}-2e_{n'n'},
$$
$$
\pi(e_{n})= e_{n-1,n'}\pm e_{n,n'+1}, \quad \pi(f_{n})= e_{n',n-1}\pm e_{n'+1,n}, \quad \pi(h_{\al_n})=
e_{n-1,n-1}+e_{nn}-e_{n'n'}-e_{n'+1,n'+1},
$$
respectively, for $\g=\s\o(2n+1)$, $\g=\s\p(2n)$, and $\g=\s\o(2n)$.

Two values of the sign give equivalent representations.
The choice of minus corresponds to the standard representation that preserves the bilinear form
with entries $C_{ij}=\dt_{i'j}$, for $\g=\s\o(N)$, and $C_{ij}=\sgn(i'-i)\dt_{i'j}$, for $\g=\s\p(N)$.
However, we fix the sign to $+$ in order to simplify calculations. The above assignment also defines representations of $U_q(\g)$.

\section{$R$-matrix of non-exceptional quantum groups}
Define $\check{\Ru}=q^{-\sum_{i=1}^n h_{\ve_i}\tp h_{\ve_i}}\Ru $.
Denote by $\check{R}^-=(\pi\tp \id)(\check{\Ru}) \in \End(\C^N)\tp U_q(\g_-)$
and by $\check{R}^+=(\pi\tp \id)(\check{\Ru}_{21}) \in \End(\C^N)\tp U_q(\g_+)$.
In this section, we deal only with $\check{R}^-$ and suppress the label "$-$" for simplicity,
$\check{R}=\check{R}^-$.

Denote by $N_+$ the ring of all upper triangular matrices in $\End(\C^N)$ and by $N'_+$ its ideal spanned by $e_{ij}$, $i<j+1$.
\begin{lemma}
One has
$$
\check{R} =1\tp 1+(q^{1+\dt_{1n}}-q^{-1-\dt_{1n}})\sum_{i=1}^n\pi(e_i)\tp f_{i}\mod N'_+\tp U_q(\g_-),
$$
where $\dt_{1n}$ is present only for $\g=\s\p(2n)$.
\end{lemma}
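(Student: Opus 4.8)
The plan is to read the statement directly off the explicit formula (\ref{Rmat}) for $\Ru$. By definition $\check{\Ru}=q^{-\sum_i h_{\ve_i}\tp h_{\ve_i}}\Ru$, which by (\ref{Rmat}) is precisely the ordered product $\prod_{\bt\in\Rm^+}\exp_{q_\bt}\{(1-q_\bt^{-2})(e_\bt\tp e_{-\bt})\}$; since $\pi$ is an algebra homomorphism and each exponential is evaluated on a tensor $e_\bt\tp e_{-\bt}$,
$$
\check{R}=(\pi\tp\id)(\check{\Ru})=\prod_{\bt\in\Rm^+}\Bigl(\,\sum_{k\ge0}q_\bt^{\frac{k(k+1)}{2}}\frac{(1-q_\bt^{-2})^k}{[k]_{q_\bt}!}\,\pi(e_\bt)^k\tp e_{-\bt}^k\Bigr),
$$
a finite product since every $\pi(e_\bt)$ is nilpotent. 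I would expand this product and group its monomials by the weight $-\nu$ of the right tensor leg, $\nu$ running through $\sum_i\N\al_i$, using the weight grading $U_q(\g_-)=\bigoplus_\nu U_q(\g_-)_{-\nu}$: the monomial with exponent vector $(k_\bt)$ lies in $\End(\C^N)\tp U_q(\g_-)_{-\nu}$ with $\nu=\sum_\bt k_\bt\bt$, and its left leg is $\prod_\bt\pi(e_\bt)^{k_\bt}$ taken in the fixed normal order.

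The low-height components are immediate. For $\nu=0$ the only monomial is $1\tp1$. For $\nu=\al_i$ a simple root, comparing heights in $\sum_\bt k_\bt\bt=\al_i$ forces $k_{\al_i}=1$ and all other $k_\bt=0$; the $k=1$ coefficient of the $\al_i$-factor equals $q_{\al_i}(1-q_{\al_i}^{-2})=q_{\al_i}-q_{\al_i}^{-1}$, so the height-one part of $\check{R}$ is $\sum_{i=1}^n(q_{\al_i}-q_{\al_i}^{-1})\,\pi(e_{\al_i})\tp e_{-\al_i}$.

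The main point is that every remaining monomial, i.e.\ one whose right leg has height at least $2$, has its left leg in $N'_+$, so that all of them together lie in $N'_+\tp U_q(\g_-)$. Such a monomial has either (i) $\sum_\bt k_\bt\ge2$, so its left leg is a product of at least two of the matrices $\pi(e_\bt)$, or (ii) exactly one $k_\bt=1$ with that $\bt$ \emph{non-simple}. Here I would use that every $\pi(e_\bt)$, $\bt\in\Rm^+$, is strictly upper triangular, because adding a positive root to any weight of the natural representation gives a weight occurring at an earlier basis position, the weights being ordered along the basis as $\ve_1,\dots,\ve_n,0,-\ve_n,\dots,-\ve_1$ (the $0$ only for $\g=\s\o(2n+1)$). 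In case (i), a product of two strictly upper triangular matrices has all nonzero entries at least two steps above the diagonal, hence lies in $N'_+$. For case (ii), I would check type by type that the weights carried by two consecutive basis vectors always differ by a simple root, the single exception being $\g=\s\o(2n)$ at the central pair, where the difference $2\ve_n$ is not a root; it follows that if $\pi(e_\bt)$ has a nonzero entry exactly one step above the diagonal then $\bt$ is simple, so a non-simple $\bt$ gives $\pi(e_\bt)\in N'_+$. The same computation gives, for $\g=\s\o(2n)$, $\pi(e_n)=e_{n-1,n'}+e_{n,n'+1}\in N'_+$, so the $i=n$ summand below is then absent modulo $N'_+$.

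Combining these, $\check{R}\equiv 1\tp1+\sum_{i=1}^n(q_{\al_i}-q_{\al_i}^{-1})\,\pi(e_{\al_i})\tp e_{-\al_i}$ modulo $N'_+\tp U_q(\g_-)$, and it only remains to match coefficients with the statement. For $i<n$ one has $q_{\al_i}=q$ in all three types; for $i=n$, the root $\al_n$ is short when $\g=\s\o(2n+1)$, where $(q^{1/2}-q^{-1/2})e_{-\al_n}=(q-q^{-1})f_n$ by the chosen normalization $f_n=[\tfrac12]_q e_{-\al_n}$, and long when $\g=\s\p(2n)$, where $q_{\al_n}=q^2$, while for $\g=\s\o(2n)$ the $i=n$ term has already dropped out. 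Rewriting $e_{-\al_i}$ as $f_i$ in the remaining cases then produces exactly the coefficient recorded in the statement. The step I expect to be the real work is case (ii): it needs the precise ordering of the weights of the natural representation and the list of nearest-neighbour weight differences in each of the three series, together with the observation about the $\g=\s\o(2n)$ degeneracy at the central pair; the rest is bookkeeping with the explicit expression (\ref{Rmat}).
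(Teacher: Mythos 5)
Your proposal is correct and follows essentially the same route as the paper's (much terser) proof: expand the product form of $\check{\Ru}$, note that $\pi(e_\al e_\bt)\in N'_+$ for any two positive roots and $\pi(e_\bt)\in N'_+$ for non-simple $\bt$, so only the degree-one terms of the simple-root exponentials survive modulo $N'_+\tp U_q(\g_-)$. Your extra type-by-type check of consecutive weight differences and the coefficient bookkeeping (including the $\s\o(2n+1)$ normalization $f_n=[\tfrac12]_q e_{-\al_n}$ and the $\s\o(2n)$ degeneracy at the central pair) simply makes explicit what the paper leaves to the reader.
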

\begin{proof}
For all positive roots $\al, \bt$ the matrix $\pi(e_\al e_\bt)$ belongs to $N'_+$.
Also, $\pi(e_\bt)\in N'_+$ for all $\bt\in \Rm^+\backslash \Pi^+$.
 Therefore, the only terms that
contribute to $\Span_{\ve_i-\ve_j\in \Pi^+}\{e_{ij}\tp U_q(\g_-)\}$
are those of degree 1 from the  series $\exp_{q_\al}(1-q_\al^{-2})(e_\al\tp e_{-\al} )$ with $\al\in \Pi^+$.
\end{proof}
\noindent
Write $\check{R}=\sum_{i,j=1}^N e_{ij}\tp \check{R}_{ij}$, where $\check{R}_{ij}=0$ for $i>j$. Due to the $\h$-invariance
of $\check R$, the entry $\check{R}_{ij}\in U_q(\g_-)$ carries weight $\ve_j-\ve_i$.

For all $\g$, we have $f_{k,k+1}=f_k=f_{k'-1,k'}$ once $k<n$
and $f_{n,n+1}=f_n=f_{n+1, n'}$ for $\g=\s\o(2n+1)$,
$f_{n-1,n'}=f_n=f_{n, n'+1}$ for $\g=\s\o(2n)$, and
$f_{nn'}=[2]_qf_n$ for $\g=\s\p(2n)$.
We present explicit expressions for the  entries $f_{ij}$ in terms of modified commutators in Chevalley generators,
 $[x,y]_a=xy- ayx$, where $a$ is a  scalar; we also put $\bar q = q^{-1}$.
\begin{propn}
\label{expl_f}
Suppose that $\ve_i-\ve_j \in \Rm^+\backslash \Pi^+$. Then the elements $f_{ij}$ are given by the following formulas:\\
For all $\g$ and  $i+1<j\leqslant\frac{N+1}{2}$:
\be
f_{ij}=[f_{j-1},\ldots [f_{i+1},f_i]_{\bar q^{}}\ldots ]_{\bar q},
\quad
f_{j'i'}=[\ldots [f_i,f_{i+1}]_{\bar q^{}},\ldots f_{j-1}]_{\bar q}.
\label{gl(n)}
\ee
Furthermore,
\begin{itemize}
\item
For $\g=\s\o(2n+1)$: $f_{nn'}=(q-1)f_n^2$ and
$$
f_{i,n+1}=[f_{n},f_{i,n}]_{\bar q^{}}, \quad f_{n+1,i'}=[f_{n',i'},f_{n}]_{\bar q^{}}, \quad i<n,
$$
$$f_{ij'}=q^{\dt_{ij}}[f_{n+1,j'},f_{i,n+1}]_{\bar q^{\dt_{ij}}},  \quad i,j<n.$$
\item
For $\g=\s\p(2n)$:  $f_{nn'}=[2]_qf_n$ and
$$
f_{in'}=[f_{n},f_{in}]_{\bar q^{2}}, \quad f_{ni'}=[f_{n'i'},f_{n}]_{\bar q^{2}}, \quad i<n,
$$
$$f_{ij'}=q^{\dt_{ij}}[f_{nj'},f_{in}]_{\bar q^{1+\dt_{ij}}}, \quad i,j<n.
$$
\item
For $\g=\s\o(2n)$: $f_{nn'}=0$ and
$$f_{in'}=[f_{n},f_{i,n-1}]_{\bar q^{}}, \quad f_{ni'}=[f_{n'+1,i'},f_n]_{\bar q^{}},\quad i<n-2,$$
$$
f_{ji'}=q^{\dt_{ij}}[f_{ni'},f_{j,n}]_{\bar q^{1+\dt_{ij}}}, \quad i,j\leqslant n-1.
$$
\end{itemize}
\end{propn}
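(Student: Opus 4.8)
The plan is to turn the intertwining relation (\ref{intertiner}) into a closed recursion for the matrix entries $\check R_{ij}$ and to solve it, using the simple-root entries from the Lemma as initial data. Note that $R=(\pi\tp\id)(\Ru)$ differs from $\check R$ only by the diagonal conjugating factor $\mathbf q=(\pi\tp\id)(q^{\sum_i h_{\ve_i}\tp h_{\ve_i}})$, whose $(k,k)$-entry is $q^{h_{\mu_k}}$ with $\mu_k$ the weight of the $k$-th basis vector of $\C^N$; so it suffices to work with $R$. Applying $\pi\tp\id$ to $\Ru\Delta(f_k)=\Delta^{op}(f_k)\Ru$ and reading off an arbitrary $(a,b)$ matrix entry gives an identity relating $\check R_{ab}$ — through a twisted commutator with $f_k$, the twist being a monomial in $q$ read off from $(\al_k,\mu_a)$, $(\al_k,\mu_b)$, $(\al_k,\al_k)$ — to the handful of entries $\check R_{a',b'}$ that survive, governed by the (very sparse) nonzero entries of $\pi(f_k)$. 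For a suitable choice of $(a,b,k)$ all but one survivor vanishes and the identity collapses to a pure step such as $\check R_{a,b+1}=[f_b,\check R_{ab}]_{c}$ (increasing the column, fixing the row) or $\check R_{a-1,b}=[\check R_{ab},f_{a-1}]_{c'}$ (decreasing the row, fixing the column).

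Running these steps along the root strings, and normalizing so that the Lemma reads $f_{k,k+1}=f_{k'-1,k'}=f_k$, one treats first the $\g\l(n)$-block $\ve_i-\ve_j\in\Rm^+$, $i,j\le n$: there $(\al_j,\mu_j)=1$ and $(\al_j,\mu_i)=0$, so fixing the row and increasing the column gives $f_{i,j+1}=[f_j,f_{ij}]_{\bar q}$, which iterates to the first formula of (\ref{gl(n)}); fixing the column and decreasing the row inside the second $\g\l(n)$ copy in $\C^N$ runs the indices in the opposite direction and produces the reversed nesting of the second formula. Pushing the same machine through the antidiagonal requires feeding in the type-dependent images $\pi(e_n),\pi(f_n),\pi(h_{\al_n})$: here $(\al_n,\al_n)$ equals $2$ for $\s\o(2n+1)$ (after the rescaling of $f_n$ fixed in Section~\ref{ssecQUEA}), $4$ for $\s\p(2n)$ and $2$ for $\s\o(2n)$, which is exactly what makes the inner parameter of the commutator $\bar q$, $\bar q^2$, $\bar q$ respectively, while the index coincidences that turn some $(\al_n,\mu_i)$ into $\pm1$ produce the corrective prefactors $q^{\dt_{ij}}$ and the shift $\bar q^{1+\dt_{ij}}$. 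The corner value $f_{nn'}$ falls out of the same computation: for $\s\o(2n)$ its weight $\ve_{n'}-\ve_n=-2\ve_n$ is not in the cone generated by the relevant simple root, forcing $f_{nn'}=0$; for $\s\p(2n)$ the single factor of $\Ru$ attached to the long root $2\ve_n$ gives $\check R_{nn'}=(q^2-q^{-2})f_n$, i.e. $f_{nn'}=[2]_qf_n$; for $\s\o(2n+1)$ the degree-two term of the exponential attached to the short root $\al_n$ contributes $\pi(e_n)^2=e_{nn'}$, and its coefficient (with $e_{-\al_n}$ re-expressed through $f_n$) yields the quadratic answer $f_{nn'}=(q-1)f_n^2$.

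The remaining entries $f_{ij'}$ — roots $\ve_i+\ve_j$ for all types, plus short roots $\ve_i$ for $\s\o(2n+1)$ — are obtained by iterating the recursion along a path that climbs one $\g\l(n)$ copy to reach $f_{in}$ (resp. $f_{n',i'}$), crosses the corner by the middle formulas just derived, and continues; the output is naturally a nested commutator of $f_j,\dots,f_{n-1}$ with a corner entry, and a short application of the $q$-Serre / $q$-Jacobi identities among the $f_k$ reorganizes it into the compact two-block forms $[f_{n+1,j'},f_{i,n+1}]$, $[f_{nj'},f_{in}]$ etc. of the statement. Because $\check R$ is the image under $\pi\tp\id$ of an honest element of $U_q(\b_+)\hat\tp U_q(\b_-)$, the full set of recursions is automatically consistent, so it is enough to verify one step at each vertex of a spanning tree of the root lattice; no independent check of the $q$-Serre relations is needed. (Alternatively one could read the same formulas straight off the product expression (\ref{Rmat}), tracking which normally ordered products of the matrix units occurring in the $\pi(e_\bt)$ chain up to a given $(i,j)$, but the combinatorics is the same.) I expect the only real difficulty to be the bookkeeping near the antidiagonal: keeping the middle weights $\mu_k$, the pairings $(\al_n,\mu_k)$ and the type-dependent shapes of $\pi(e_n),\pi(f_n)$ straight simultaneously for $B_n$, $C_n$, $D_n$, so that the exponents $\bar q^{1+\dt_{ij}}$, the prefactors $q^{\dt_{ij}}$ and the anomalous corner values $(q-1)f_n^2$, $[2]_qf_n$, $0$ all come out with the correct powers of $q$.
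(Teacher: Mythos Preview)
Your proposal is correct and follows essentially the same approach as the paper: both derive a recursion for the entries $\check R_{ij}$ from the intertwining relation (\ref{intertiner}) specialized to $x=f_\al$, then solve it inductively starting from the simple-root entries supplied by the Lemma. The paper compresses all of this into the single displayed identity $(f_\al\tp 1)\check{\Ru}-\check{\Ru}(f_\al\tp 1)=\check{\Ru}(q^{-h_\al}\tp f_\al)-(q^{h_\al}\tp f_\al)\check{\Ru}$ and the phrase ``direct calculation,'' whereas you spell out the shape of the recursion, the type-by-type crossing of the antidiagonal, and the origin of the exponents $\bar q^{1+\dt_{ij}}$ and prefactors $q^{\dt_{ij}}$; but the underlying mechanism is the same.
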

\begin{proof}
The proof is a direct calculation with the use of the identity
$$
(f_\al\tp 1)\check{\Ru}- \check{\Ru}(f_\al\tp 1)  =\check{\Ru}(q^{-h_\al}\tp f_\al)-(q^{h_\al}\tp f_\al)\check{\Ru},
$$
which follows from
the intertwining axiom (\ref{intertiner}) for $x=f_\al$.
This allows us to construct the elements $f_{ij}$ by induction  starting from $f_\al$, $\al\in \Pi^+$.
\end{proof}
For each $\al\in \Pi^+$, denote by $P(\al)$ the set of ordered pairs $l,r=1,\ldots,N$,
with $\ve_l-\ve_r= \al$. We call such pairs simple.
\begin{propn}
\label{non-dyn_f_ij}
The matrix entries $f_{i,j}\in U_q(\g_-)$ such that  $\ve_i-\ve_j \not \in \Pi^+$
satisfy the identity
$$[e_{\al},f_{ij}]=\sum_{(l,r)\in P(\al)}
\bigl(f_{il}\dt_{jr}q^{h_{\al}}-q^{-h_{\al}}\dt_{il}f_{rj}\bigr),$$
for all simple positive  roots  $\al$.
\end{propn}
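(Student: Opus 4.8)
The plan is to run, with $x=e_\al$ instead of $x=f_\al$, the argument that proves Proposition~\ref{expl_f}: feed the intertwining relation (\ref{intertiner}) into the matrix $\check{R}$ and compare $(i,j)$ matrix entries, bearing in mind that $f_{ij}$ is the entry $\check{R}_{ij}$ rescaled by the universal factor $q-q^{-1}$ already occurring in the Lemma. The $e$-analogue of the identity quoted in the proof of Proposition~\ref{expl_f} is obtained by substituting $\Delta(e_\al)=e_\al\tp q^{h_\al}+1\tp e_\al$ and $\Delta^{op}(e_\al)=q^{h_\al}\tp e_\al+e_\al\tp 1$ into (\ref{intertiner}), writing $\Ru=q^{\sum_i h_{\ve_i}\tp h_{\ve_i}}\check{\Ru}$, and moving the Cartan prefactor to the left on both sides; this uses only $[\sum_i h_{\ve_i}\tp h_{\ve_i},\,1\tp e_\al]=h_\al\tp e_\al$ and $[\sum_i h_{\ve_i}\tp h_{\ve_i},\,e_\al\tp 1]=e_\al\tp h_\al$, so the prefactor cancels and one is left with
$$(e_\al\tp q^{-h_\al})\check{\Ru}-\check{\Ru}(e_\al\tp q^{h_\al})=[\check{\Ru},\,1\tp e_\al],\qquad \al\in\Pi^+.$$

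Next I would apply $\pi\tp\id$. From the explicit natural representation and our choice of the sign $+$, every simple root vector is a sum of matrix units, $\pi(e_\al)=\sum_{(l,r)\in P(\al)}e_{lr}$, while $\pi(q^{\pm h_\al})$ is diagonal. Multiplying out $(\pi(e_\al)\tp q^{-h_\al})\check{R}$ and $\check{R}(\pi(e_\al)\tp q^{h_\al})$ and collecting the coefficient of $e_{ij}$, in which $e_{lr}\check{R}$ contributes $\check{R}_{rj}$ exactly when $l=i$, $\check{R}e_{lr}$ contributes $\check{R}_{il}$ exactly when $r=j$, and $[\check{\Ru},1\tp e_\al]$ becomes $\sum_{a,b}e_{ab}\tp[\check{R}_{ab},e_\al]$, one gets
$$\sum_{(l,r)\in P(\al)}\dt_{il}\,q^{-h_\al}\check{R}_{rj}-\sum_{(l,r)\in P(\al)}\dt_{jr}\,\check{R}_{il}\,q^{h_\al}=[\check{R}_{ij},\,e_\al].$$

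It then remains to settle two points. A diagonal entry $\check{R}_{kk}=1$, which is the only possible source of a Cartan summand on the right-hand side, can enter this formula only through a pair having $l=i$ and $r=j$ simultaneously, i.e.\ only if $\ve_i-\ve_j=\al\in\Pi^+$; hence under the hypothesis $\ve_i-\ve_j\notin\Pi^+$ all the surviving $\check{R}_{rj}$ and $\check{R}_{il}$ are genuine elements of $U_q(\g_-)$ (possibly zero, namely when the relevant difference of the $\ve$'s is not a sum of positive roots) and no Cartan term is produced. Rewriting $[\check{R}_{ij},e_\al]=-[e_\al,\check{R}_{ij}]$ and dividing by $q-q^{-1}$ then yields the asserted identity for the $f_{ij}$; here the Lemma gives $\check{R}_{ij}=(q-q^{-1})f_{ij}$ on the simple entries, the normalisations $f_n=[\tfrac12]_qe_{-\al_n}$ for $\s\o(2n+1)$ and $f_{nn'}=[2]_qf_n$ for $\s\p(2n)$ make this proportionality uniform in $(i,j)$, and for the remaining entries it is part of the content of Proposition~\ref{expl_f}.

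The calculation is routine; the only places that require care are the bookkeeping in the first step, namely checking that $q^{\sum_i h_{\ve_i}\tp h_{\ve_i}}$ cancels cleanly on both sides, and the uniform treatment of the long/short simple root $\al_n$, in particular reading $q^{h_{\al_n}}$ and the scalar $q-q^{-1}$ off the renormalised relation $[e_n,f_n]=\frac{q^{h_{\al_n}}-q^{-h_{\al_n}}}{q-q^{-1}}$ rather than off the generic one. Neither of these presents a genuine obstacle.
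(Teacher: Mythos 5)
Your argument is correct and is exactly the route the paper takes: its proof consists of the same consequence of the intertwining relation, namely $(1\tp e_\al)\check{\Ru}-\check{\Ru}(1\tp e_\al)=\check{\Ru}(e_\al\tp q^{h_\al})-(e_\al\tp q^{-h_\al})\check{\Ru}$ (your display is this identity with both sides negated), followed by applying $\pi\tp\id$ and comparing matrix entries. You have merely written out the details the paper leaves as a ``straightforward calculation,'' including the correct observation that the hypothesis $\ve_i-\ve_j\notin\Pi^+$ is precisely what keeps the diagonal entries $\check{R}_{kk}=1$ from contributing.
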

\begin{proof}
The proof is a straightforward calculation based on the intertwining relation  (\ref{intertiner}), which
is equivalent to
$$
(1\tp e_\al) \check{\Ru}-\check{\Ru}(1\tp e_\al)=\check{\Ru}(e_\al\tp q^{h_\al})-(e_\al\tp q^{-h_\al}) \check{\Ru},
$$
for $x=e_\al$, $\al\in \Pi^+$. Alternatively, one can use the  expressions for $f_{ij}$ from Proposition \ref{expl_f}.
\end{proof}
\section{Mickelsson algebras}
Consider the Lie subalgebra $\g'\subset \g$ corresponding to the root subsystem $\Rm_{\g'}\subset \Rm_\g$ generated by $\al_i$, $i>1$, and
let $\h'\subset \g'$ denote its Cartan subalgebra.
Let the triangular decomposition $\g'_-\op \h'\op \g'_+$ be compatible with the triangular decomposition of $\g$. Recall the definition of step algebra
$Z_q(\g,\g')$ of the pair $(\g,\g')$.
Consider the left ideal $J=U_q(\g)\g'_+$ and its normalizer  $\Nc=\{x\in U_q(\g): e_\al x \subset J, \forall \al \in \Pi^+_{\g'}\}$. By construction, $J$ is a two-sided ideal in the algebra $\Nc$. Then $Z_q(\g,\g')$ is
the quotient $\Nc/J$.

For all $\bt_i\in \Rm^+_\g\backslash \Rm^+_{\g'}$ let $e_{\bt_i}$ be the corresponding PBW generators
and let  $Z$ be  the vector space spanned by $e_{-\bt_l}^{k_l}\ldots e_{-\bt_1}^{k_1}e_0^{k_0} e_{\bt_1}^{m_1}\ldots e_{\bt_l}^{m_l}$,
were  $e_0=q^{h_{\al_1}}$, $k_i\in \Z_+$, and $k_0\in \Z$.
The PBW factorization
$
U_q(\g)=U_q(\g'_-)Z  U_q(\h') U_q(\g'_+)
$
gives rise to the decomposition
$$
U_q(\g)=Z U_q(\h') \op (\g'_- U_q(\g)+ U_q(\g)\g'_+).
$$
\begin{propn}[\cite{Kek}, Theorem 1]
The projection $U_q(\g)\to Z U_q(\h')$ implements an embedding of $Z_q(\g,\g')$ in $Z U_q(\h')$.
\label{Kekalainen}
\end{propn}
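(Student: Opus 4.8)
\medskip
\noindent\textbf{Proof proposal.} The plan is to discard the easy inclusion, recast injectivity as the vanishing of an intersection inside a cyclic module, and then settle that using the $U_q(\g'_-)$-freeness of the module. Since $J=U_q(\g)\g'_+\subseteq \g'_-U_q(\g)+U_q(\g)\g'_+$, the projection $\P\colon U_q(\g)\to Z U_q(\h')$ along $\g'_-U_q(\g)+U_q(\g)\g'_+$ annihilates $J$ and hence descends to a linear map $\bar\P\colon Z_q(\g,\g')=\Nc/J\to ZU_q(\h')$. It remains to prove $\bar\P$ injective, that is $\Nc\cap\bigl(\g'_-U_q(\g)+J\bigr)=J$; since $J\subseteq\Nc$, this is equivalent to $\Nc\cap \g'_-U_q(\g)\subseteq J$.

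Next I would pass to the cyclic left module $\M:=U_q(\g)/J$ with canonical generator $\bar 1$. For $x\in U_q(\g)$ and $\al\in\Pi^+_{\g'}$ one has $e_\al x\in J$ if and only if $e_\al(x\bar 1)=0$, so $\Nc\bar 1$ is exactly the space $\M^{\g'_+}$ of $\g'_+$-singular vectors of $\M$, and $x\mapsto x\bar 1$ induces an isomorphism $\Nc/J\cong \M^{\g'_+}$ whose kernel is $\Nc\cap J=J$. The factorizations $U_q(\g)=U_q(\g'_-)\,Z\,U_q(\h')\,U_q(\g'_+)$ and $U_q(\g)\g'_+=U_q(\g'_-)\,Z\,U_q(\h')\,\overline{U_q(\g'_+)}$, with $\overline{U_q(\g'_+)}=\g'_+U_q(\g'_+)$, present $\M$ as a free left $U_q(\g'_-)$-module on the image $W$ of $ZU_q(\h')$; hence $\g'_-\M=\overline{U_q(\g'_-)}\,W$, $\M/\g'_-\M\cong W\cong ZU_q(\h')$, and $\bar\P$ becomes the restriction to $\M^{\g'_+}$ of the projection $\pi_0\colon\M\to\M/\g'_-\M$. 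Thus it suffices to prove $\M^{\g'_+}\cap \g'_-\M=0$.

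To see this, filter $U_q(\g'_-)$ by the number of Chevalley factors $f_\al$, $\al\in\Pi^+_{\g'}$, giving $\C=F_0\subseteq F_1\subseteq\cdots$, and put $\M_{[k]}:=F_kW$. Using $[e_\al,f_\bt]=\dt_{\al\bt}\frac{q^{h_\al}-q^{-h_\al}}{q_\al-q_\al^{-1}}\in U_q(\h')$, the identity $e_\al z\in Z e_\al+Z$ for $z\in Z$ (a routine commutator computation in $U_q(\g)$, of the same nature as Propositions \ref{expl_f} and \ref{non-dyn_f_ij}), and the fact that $e_\al\bar 1=0$, one checks that $e_\al$ preserves each $\M_{[k]}$, preserves $W=\M_{[0]}$, and on the associated graded $\mathrm{gr}\,\M\cong\mathrm{gr}\,U_q(\g'_-)\otimes W$ acts as $\id\otimes(e_\al|_W)$ — the commutator corrections lowering the filtration degree by one. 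This triangularity is the quantum shadow of the classical embedding $Z(\g,\g')\hookrightarrow Z\,U(\h')$; to extract $\M^{\g'_+}\cap \g'_-\M=0$ from it I would follow \cite{Zh1,Zh2} and \cite{Kek}: localize at the Ore set $U_q(\h')\setminus\{0\}$, equivalently extend scalars to $K=\mathrm{Frac}\,U_q(\h')$, where the coefficients of the $\g'$-extremal projector $p'$ (cf.\ \cite{T}) become invertible. Then $p'$ acts as a $\g'$-equivariant idempotent on $\M_K$ with image the $\g'_+$-singular vectors and kernel containing $\g'_-\M_K$, which gives $\M_K^{\g'_+}\cap\g'_-\M_K=0$ and hence $\M^{\g'_+}\cap\g'_-\M=0$, since $\M$ is $U_q(\h')$-torsion-free and embeds in $\M_K$. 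The step I expect to be the main obstacle is precisely the convergence of $p'$ on $\M_K$: because $\g'_+$ does not act locally nilpotently on $\M$, one has to argue one $\h$-weight component at a time, where the PBW description of $\M$ supplies the finiteness that makes $p'$ well defined; this is the content of \cite{Kek}, Theorem 1.
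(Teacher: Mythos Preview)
The paper does not give its own argument here: its entire proof is a one-line citation of \cite{Kek}, together with the remark that Kek\"al\"ainen's reasoning for types $A$, $B$, $D$ carries over unchanged to the symplectic case. Your outline is a faithful expansion of precisely that reasoning --- the PBW factorisation of $U_q(\g)$, the identification of $Z_q(\g,\g')$ with the $\g'_+$-invariants of $\M=U_q(\g)/J$, the reduction of injectivity to $\M^{\g'_+}\cap\g'_-\M=0$, and the appeal to the $\g'$-extremal projector over $\mathrm{Frac}\,U_q(\h')$ --- and at the decisive convergence step you too defer to \cite{Kek}. So in substance the two proofs coincide; yours simply makes explicit the skeleton the paper leaves implicit.

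One remark on the scaffolding. The intermediate claim ``$e_\al z\in Ze_\al+Z$ for $z\in Z$'' is not literally true in the quantum PBW basis: commuting $e_\al$ past a monomial in the $e_{\pm\bt_i}$ produces $U_q(\h)$-coefficients and reordered factors that need not lie in the span $Z$ as defined. It is also not needed --- once you pass to the extremal projector, the filtration discussion becomes motivational rather than logical, and your reduction up to ``$\M^{\g'_+}\cap\g'_-\M=0$'' is already clean and correct without it.
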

\begin{proof}
The  statement is proved in  \cite{Kek} for the orthogonal and special linear quantum groups
 but the arguments apply to symplectic groups too.
\end{proof}
It is proved within the theory of extremal projectors that
generators 
of $Z_q(\g,\g')$ are labeled by
the roots $\bt\in \Rm_\g\backslash \Rm_{\g'}$ plus $z_0=q^{h_{\al_1}}$.
We calculate them in the subsequent sections, cf. Propositions \ref{negative} and \ref{positive}.

\subsection{Lowering operators}
In what follows, we extend $U_q(\g)$ along with its subalgebras containing $U_q(\h)$ over the field of fractions of $U_q (\h)$
and denote such an extension by hat, e.g. $\hat U_q(\g)$. In this section we calculate representatives
of the negative generators of $Z_q(\g,\g')$  in $\hat U_q(\b_-)$.

Set $h_i=h_{\ve_i}\in \h$ for all $i=1,\ldots,N$ and introduce $\eta_{ij}\in \h+\C$ for $i,j=1,\ldots,N$, by
\be
\eta_{ij}=h_i-h_j+(\ve_i-\ve_j, \rho)-\frac{1}{2}|\!|\ve_i-\ve_j|\!|^2.
\ee
Here $|\!|\mu|\!|$ is the Euclidean norm on $\h^*$.
\begin{lemma}
\label{thetas}
Suppose that $(l,r)\in P(\al)$ for some $\al\in \Pi^+$. Then
\begin{itemize}
\item[i)] if  $l<r<j$, then $\eta_{lj}-\eta_{rj}=h_\al+(\al,\ve_j-\ve_r)$,
\item[ii)] if  $i<l<r $, then $\eta_{li}-\eta_{ri}=h_\al+(\al,\ve_i-\ve_r)$,
\item[iii)]
 $\eta_{lr}=h_\al.$
\end{itemize}
\end{lemma}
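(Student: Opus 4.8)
The plan is to unwind the definition of $\eta_{ij}$ and use the fact that, since $(l,r)\in P(\al)$, we have $\ve_l-\ve_r=\al\in\Pi^+$. In particular $\al$ is a simple root, so it is either of the form $\ve_a-\ve_{a+1}$, or $\ve_a+\ve_{a+1}$, or $2\ve_a$, or $\ve_a$ depending on the type of $\g$ and whether $a=n$; consequently $\|\al\|^2\in\{1,2,4\}$ and one has the standard identity $(\al,\rho)=\tfrac12\|\al\|^2$ for a simple root. I would first record this observation, because all three items reduce to bookkeeping with $h_\al=h_l-h_r$ (which holds precisely because $\ve_l-\ve_r=\al$ identifies $h_\al$ with $h_{\ve_l}-h_{\ve_r}=h_l-h_r$ after translating through $\h^*\simeq\h$).

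For item (iii), I would simply compute $\eta_{lr}=h_l-h_r+(\ve_l-\ve_r,\rho)-\tfrac12\|\ve_l-\ve_r\|^2=h_\al+(\al,\rho)-\tfrac12\|\al\|^2$, and this equals $h_\al$ by the identity $(\al,\rho)=\tfrac12\|\al\|^2$ just noted. This is the cleanest case and I would present it first to fix notation. For items (i) and (ii) the strategy is to subtract the two defining expressions so that the Weyl-vector and norm contributions collapse. For (i), $\eta_{lj}-\eta_{rj}=(h_l-h_r)+(\ve_l-\ve_r,\rho)-\tfrac12\|\ve_l-\ve_j\|^2+\tfrac12\|\ve_r-\ve_j\|^2 = h_\al+(\al,\rho)-\tfrac12\bigl(\|\ve_l-\ve_j\|^2-\|\ve_r-\ve_j\|^2\bigr)$. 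Expanding the norms via $\|\ve_a-\ve_j\|^2=\|\ve_a\|^2-2(\ve_a,\ve_j)+\|\ve_j\|^2$ and using $\ve_l=\al+\ve_r$, the difference of norms becomes $\|\ve_l\|^2-\|\ve_r\|^2-2(\ve_l-\ve_r,\ve_j)=\|\al+\ve_r\|^2-\|\ve_r\|^2-2(\al,\ve_j)=\|\al\|^2+2(\al,\ve_r)-2(\al,\ve_j)$. Substituting back and cancelling $(\al,\rho)=\tfrac12\|\al\|^2$ against $-\tfrac12\|\al\|^2$ leaves $\eta_{lj}-\eta_{rj}=h_\al-(\al,\ve_r)+(\al,\ve_j)=h_\al+(\al,\ve_j-\ve_r)$, as claimed. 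Item (ii) is completely parallel: one gets $\eta_{li}-\eta_{ri}=h_\al-\tfrac12\bigl(\|\ve_l-\ve_i\|^2-\|\ve_r-\ve_i\|^2\bigr)+(\al,\rho)$ and the same norm expansion with $\ve_l=\al+\ve_r$ produces $h_\al+(\al,\ve_i-\ve_r)$.

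There is no real obstacle here — the content is entirely linear-algebraic and the only thing to be careful about is that the identity $(\al,\rho)=\tfrac12\|\al\|^2$ really does hold for every simple root appearing in types $B$, $C$, $D$, including the short/long anomalies at the $n$-th node; this is classical ($\rho$ pairs to $1$ with every simple coroot $\al^\vee$, i.e. $\tfrac{2(\al,\rho)}{(\al,\al)}=1$), but I would state it explicitly as the one external input. I would also remark that the hypotheses $l<r<j$ in (i) and $i<l<r$ in (ii) are not used in the algebra at all — they only guarantee that the indices $\eta_{lj},\eta_{rj}$ etc. are the ones relevant to the later arguments — so the proof of the identities themselves needs only $(l,r)\in P(\al)$. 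The whole argument is three short displayed computations.
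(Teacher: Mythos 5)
Your proposal is correct and follows essentially the same route as the paper: both rest on the identity $(\al,\rho)=\tfrac12\|\al\|^2$ for simple roots (which disposes of (iii) immediately) and then reduce (i) and (ii) to the expansion of a difference of squared norms, with the $\tfrac12\|\al\|^2$ terms cancelling to leave $h_\al+(\al,\ve_j-\ve_r)$ resp.\ $h_\al+(\al,\ve_i-\ve_r)$. The only cosmetic difference is that you expand $\|\ve_a-\ve_j\|^2$ term by term via $\ve_l=\al+\ve_r$, whereas the paper writes the same cancellation as $\tfrac12\|\ve_j-\ve_r\|^2-\tfrac12\|\ve_j-\ve_r-\al\|^2$; the content is identical.
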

\begin{proof}
We have $(\al, \rho)=\frac{1}{2}|\!|\al|\!|^2$ for all $\al \in \Pi^+$. This proves iii). Further, for $\ve_l-\ve_r=\al$:
\be
\eta_{lj}-\eta_{rj}&=&h_\al+\frac{1}{2}|\!|\al|\!|^2+\frac{1}{2}|\!|\ve_j-\ve_r|\!|^2-\frac{1}{2}|\!|\ve_j-\ve_r-\al|\!|^2=h_\al+(\al,\ve_j-\ve_r),
\quad r< j,
\nn
\\
\eta_{li}-\eta_{ri}&=&h_\al+\frac{1}{2}|\!|\al|\!|^2+\frac{1}{2}|\!|\ve_i-\ve_r|\!|^2-\frac{1}{2}|\!|\ve_i-\ve_r-\al|\!|^2=h_\al+(\al,\ve_i-\ve_r),
\quad i< l,
\nn
\ee
which proves i) and ii).
\end{proof}

We call a strictly ascending sequence $\vec m=(m_1,\ldots,m_s)$ of integers a route from $m_1$ to $m_s$.
We write $m<\vec m$ and $\vec m< m$ for $m\in \Z$ if, respectively, $m< \min\vec m$ and $\max\vec m< m$. More generally,
we write $\vec m< \vec k$ if $\max\vec m < \min \vec k$.
In this case, a sequence $(\vec m, \vec k)$ is a route from $\min \vec m$ to $\max\vec k$.

Given a route $\vec m=(m_1,\ldots,m_s)$, define the product
$
f_{\vec m}=f_{m_1,m_2} \cdots f_{m_{s-1},m_s}\in U_q(\g_-)$.
Consider a free right $\hat U_q(\h)$-module $\Phi_{1m}$ generated by $f_{\vec m}$ with $1\leqslant\vec m \leqslant  j$ and
define an operation $\partial_{lr}\colon \Phi_{1j}\to \hat U_q(\b_-)$ for $(l,r)\in P(\al)$ as follows. Assuming
$1\leqslant \vec\ell < l< r< \vec \rho < j$, set
$$
\begin{array}{rrccc}
\partial_{lr}f_{(\vec\ell,l)}f_{(l,r)}f_{(r,\vec \rho)}&=&f_{(\vec\ell,l)}f_{(r,\vec \rho)}[\eta_{lj}-\eta_{rj}]_q,\\
\partial_{lr}f_{(\vec\ell,l)}f_{(l,\vec \rho)}&=&-f_{(\vec\ell,l)}f_{(r,\vec \rho)}q^{-\eta_{lj}+\eta_{rj}},
\\
\partial_{lr}f_{(\vec\ell,r)}f_{(r,\vec \rho)}&=&f_{(\vec\ell,l)}f_{(r,\vec \rho)}q^{\eta_{lj}-\eta_{rj}},
\\
\partial_{lr}f_{\vec m}&=&0, & l\not\in \vec m, &r \not \in \vec m.\\
\end{array}
$$
Extend $\partial_{lr}$ to entire $\Phi_{1j}$  by $\hat U_q(\h)$-linearity. Let $p\colon \Phi_{1j}\to \hat U(\g)$
denote the natural homomorphism of $\hat U_q(\h)$-modules.
\begin{lemma}
\label{partial}
For all $\al \in \Pi^+$   and all $x\in \Phi_{1j}$,
$e_{\al}\circ p(x)=\sum_{(l,r)\in P(\al)}\partial_{lr}x \mod \hat U_q(\g)e_\al$.
\end{lemma}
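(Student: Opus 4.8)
The plan is to prove Lemma \ref{partial} by induction on the length of the route $\vec m$ underlying $x$, reducing to the case where $x = f_{\vec m}$ is a single PBW-type product $f_{m_1,m_2}\cdots f_{m_{s-1},m_s}$ with $1 \leqslant \vec m \leqslant j$, since both sides are $\hat U_q(\h)$-linear. The key input is Proposition \ref{non-dyn_f_ij}, which computes $[e_\al, f_{m_t,m_{t+1}}]$ as a sum over simple pairs $(l,r)\in P(\al)$ of terms $f_{m_t,l}\delta_{m_{t+1},r}q^{h_\al} - q^{-h_\al}\delta_{m_t,l}f_{r,m_{t+1}}$, together with the commutation relation $q^{h_\al}e_{-\bt}q^{-h_\al} = q^{-(\al,\bt)}e_{-\bt}$ for moving the Cartan factors $q^{\pm h_\al}$ past the remaining $f$'s.

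**First I would** expand $e_\al \cdot f_{m_1,m_2}\cdots f_{m_{s-1},m_s}$ using the Leibniz-type rule: write $e_\al f_{\vec m} = \sum_t f_{m_1,m_2}\cdots [e_\al, f_{m_t,m_{t+1}}]\cdots f_{m_{s-1},m_s} + (\text{term with }e_\al\text{ on the far right})$, the last term lying in $\hat U_q(\g)e_\al$ and hence dropped modulo $\hat U_q(\g)e_\al$. Substituting the formula from Proposition \ref{non-dyn_f_ij} into each bracket, a term survives only when the simple pair $(l,r)$ either has $r = m_t$ for some interior index (the ``$q^{h_\al}$'' piece, which splices $f_{m_1,m_2}\cdots f_{m_{t-1},l}$ onto the tail, i.e. deletes $l<r$ from $\vec m$) or $l = m_t$ (the ``$-q^{-h_\al}$'' piece). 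Because $\vec m$ is strictly ascending and $(l,r)\in P(\al)$ forces $l<r$ with $\ve_l - \ve_r = \al$, for a fixed $(l,r)$ at most the configurations $\vec m = (\vec\ell, l, r, \vec\rho)$, $\vec m = (\vec\ell, l, \vec\rho)$ with $r\notin\vec m$, or $\vec m = (\vec\ell, r, \vec\rho)$ with $l\notin \vec m$ can contribute, matching exactly the four cases in the definition of $\partial_{lr}$; if neither $l$ nor $r$ appears, $\partial_{lr}f_{\vec m}=0$, consistent with no surviving term.

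**The main obstacle** will be bookkeeping the Cartan factors: after extracting $[e_\al, f_{m_t,m_{t+1}}]$ the factor $q^{\pm h_\al}$ must be commuted to the right past $f_{m_{t+1},m_{t+2}}\cdots f_{m_{s-1},m_s}$, producing a scalar $q^{\pm(\al, \,\ve_{m_{t+1}} - \ve_{m_s})}$ or similar, and one must check this combines with the surviving $q^{h_\al}$ (resp. $-q^{-h_\al}$) to give precisely $[\eta_{lj}-\eta_{rj}]_q$ in the first case and $q^{\eta_{lj}-\eta_{rj}}$ (resp. $-q^{-\eta_{lj}+\eta_{rj}}$) in the third (resp. second). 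This is exactly where Lemma \ref{thetas} is used: part iii), $\eta_{lr}=h_\al$, handles the case $\vec m=(\vec\ell,l,r,\vec\rho)$ where both the $q^{h_\al}$ term at $r=m_{t+1}$ and the $-q^{-h_\al}$ term at $l=m_{t+1}$ (from the neighbouring bracket) are present and must be collected into a single commutator $[\eta_{lj}-\eta_{rj}]_q = [h_\al + (\al,\ve_j-\ve_r)]_q$, the shift $(\al,\ve_j - \ve_r)$ coming from commuting $q^{h_\al}$ through $f_{(r,\vec\rho)}\cdots$; parts i) and ii) handle the one-sided cases $l<r<j$ and $i<l<r$ respectively.

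**To finish**, I would verify the base case $s=2$, $x=f_{m_1,m_2}$ directly from Proposition \ref{non-dyn_f_ij} (here $\partial_{lr}$ reduces to $f_{m_1,l}\delta_{m_2,r}q^{h_\al - \cdots}$ etc., and Lemma \ref{thetas} iii) with $j$ replaced appropriately gives the scalar), and then observe that the inductive step is simply the observation that splitting off the last factor $f_{m_{s-1},m_s}$ and applying the Leibniz rule reproduces the recursive structure of the definition of $\partial_{lr}$ on $\Phi_{1j}$; the three displayed defining equations for $\partial_{lr}$ are, term by term, the three surviving contributions, so the identity holds modulo $\hat U_q(\g)e_\al$.
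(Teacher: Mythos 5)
Your proposal is correct and follows exactly the route the paper intends: the paper's proof is the one-line remark that the claim is ``a straightforward analysis based on Proposition \ref{non-dyn_f_ij} and Lemma \ref{thetas}'', and your expansion of $e_\al f_{\vec m}$ via the Leibniz rule, the matching of the surviving $(l,r)$-terms with the defining cases of $\partial_{lr}$, and the use of Lemma \ref{thetas} to absorb the weight shifts of $q^{\pm h_\al}$ into $\eta_{lj}-\eta_{rj}$ is precisely that analysis written out. The only point to tidy is the configuration $\vec m=(\vec\ell,l,r,\vec\rho)$, where the factor $f_{(l,r)}$ is a simple-root generator not covered by Proposition \ref{non-dyn_f_ij}; there one uses $[e_\al,f_{(l,r)}]=[h_\al]_q$ directly from the defining relations (both exponential terms come from this single bracket, not from neighbouring ones), which then shifts to $[\eta_{lj}-\eta_{rj}]_q$ as you describe.
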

\begin{proof}
A straightforward analysis based on Proposition \ref{non-dyn_f_ij} and Lemma \ref{thetas}.
\end{proof}
\noindent
To simplify the presentation, we suppress the symbol of projection $p$ in what follows.

Introduce elements
$A_{r}^{j}\in  \hat U_q(\h)$ by
\be
A_{r}^{j}&=&\frac{q-q^{-1}}{q^{-2\eta_{rj}}-1},
\label{A_r}
\ee
for all $r,j\in [1,N]$ subject to $r< j$.
For each simple pair $(l,r)$ we define  $(l,r)$-chains as
\be
f_{(\vec \ell,l)}f_{(l,\vec\rho)}A_{l}^{j}+f_{(\vec\ell,l)}f_{(l,r)}f_{(r,\vec\rho)}A_{l}^{j}A_{r}^{j}+f_{(\vec\ell,r)}f_{(r,\vec\rho)}A_{r}^{j},
\quad f_{(\vec\ell,l)}f_{l,j}A_{l}^{j}+f_{(\vec\ell,j)},
\label{chain}
\ee
where $1\leqslant\vec \ell < l$ and $r<\vec\rho \leqslant j$. Remark that $f_{(l,r)}=\left[\frac{(\al,\al)}{2}\right]_q e_{-\al}$, where $\al=\ve_l-\ve_r$.
\begin{lemma}
\label{open-right}
The operator $\prt_{lr}$ annihilates $(l,r)$-chains.
\end{lemma}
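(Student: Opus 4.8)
The plan is to verify directly that $\prt_{lr}$ kills each of the two chains in (\ref{chain}) by using the explicit definition of $\prt_{lr}$ together with the identity relating the scalars $A^j_r$ to the $q$-brackets $[\eta_{lj}-\eta_{rj}]_q$ and the powers $q^{\pm(\eta_{lj}-\eta_{rj})}$. First I would record the key algebraic identity: since $\eta_{lr}=h_\al$ by Lemma \ref{thetas}(iii), and more to the point since $A^j_r=\frac{q-q^{-1}}{q^{-2\eta_{rj}}-1}$, one has the relation
\[
[\eta_{lj}-\eta_{rj}]_q\,A^j_lA^j_r \;-\; q^{-\eta_{lj}+\eta_{rj}}A^j_l \;+\; q^{\eta_{lj}-\eta_{rj}}A^j_r \;=\;0,
\]
which is a purely elementary computation in $\hat U_q(\h)$ once one substitutes $A^j_l=\frac{q-q^{-1}}{q^{-2\eta_{lj}}-1}$, $A^j_r=\frac{q-q^{-1}}{q^{-2\eta_{rj}}-1}$, clears denominators, and uses $[z]_q=\frac{q^z-q^{-z}}{q-q^{-1}}$; everything is a rational function in $q^{\eta_{lj}}, q^{\eta_{rj}}$. (Here I must be mildly careful that $\prt_{lr}$ multiplies the $\hat U_q(\h)$-coefficients on the right and that the $\eta$'s commute with the monomials $f_{(\vec\ell,l)}f_{(r,\vec\rho)}$ up to weight shifts — but since the three terms of a chain all produce the \emph{same} monomial $f_{(\vec\ell,l)}f_{(r,\vec\rho)}$ after applying $\prt_{lr}$, the weight-commutation is uniform and can be absorbed, so the vanishing reduces to the scalar identity above.)

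Next I apply $\prt_{lr}$ termwise to the first chain in (\ref{chain}). By the four-line definition of $\prt_{lr}$: the term $f_{(\vec\ell,l)}f_{(l,r)}f_{(r,\vec\rho)}A^j_lA^j_r$ contributes $f_{(\vec\ell,l)}f_{(r,\vec\rho)}[\eta_{lj}-\eta_{rj}]_q A^j_lA^j_r$; the term $f_{(\vec\ell,l)}f_{(l,\vec\rho)}A^j_l$ (which is $f_{(\vec\ell,l)}f_{(l,\vec\rho)}$ with $l\in\vec m$, $r\notin\vec m$, matching the second defining line with $\vec\rho$ playing the role there) contributes $-f_{(\vec\ell,l)}f_{(r,\vec\rho)}q^{-\eta_{lj}+\eta_{rj}}A^j_l$; and $f_{(\vec\ell,r)}f_{(r,\vec\rho)}A^j_r$ (third defining line, $r\in\vec m$, $l\notin\vec m$) contributes $f_{(\vec\ell,l)}f_{(r,\vec\rho)}q^{\eta_{lj}-\eta_{rj}}A^j_r$. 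Summing, the coefficient of $f_{(\vec\ell,l)}f_{(r,\vec\rho)}$ is exactly the left-hand side of the scalar identity above, hence zero. For the second, shorter chain $f_{(\vec\ell,l)}f_{l,j}A^j_l+f_{(\vec\ell,j)}$ one proceeds the same way but with $r=j$: here the route $(l,j)$ has the special form where $j$ is the final index, so only the first and third defining lines of $\prt_{lr}$ are relevant (with $\vec\rho$ empty), giving $f_{(\vec\ell,l)}f_{l,j}A^j_l\mapsto f_{(\vec\ell,j)}[\eta_{lj}-\eta_{jj}]_q A^j_l$ and $f_{(\vec\ell,j)}\mapsto f_{(\vec\ell,j)}q^{\eta_{lj}-\eta_{jj}}$ — wait, one must read the definition carefully for $r=j$, where $\eta_{rj}=\eta_{jj}=h_j-h_j+0-0=0$, so $[\eta_{lj}]_q A^j_l = [\eta_{lj}]_q\frac{q-q^{-1}}{q^{-2\eta_{lj}}-1}=-q^{\eta_{lj}}$ after simplification, and this cancels the $+q^{\eta_{lj}-0}$ from the bare $f_{(\vec\ell,j)}$ term up to sign bookkeeping; I would check the sign conventions in the defining lines once more to make the cancellation exact.

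The main obstacle I anticipate is purely organizational rather than conceptual: getting the three defining cases of $\prt_{lr}$ matched correctly to the three monomials of a chain (in particular tracking which of $l,r$ lies in which route, and whether $\vec\rho$ or $\vec\ell$ is empty), and handling the degenerate endpoint case $r=j$ of the second chain without off-by-one errors in the subscripts. Once the bookkeeping is pinned down, the whole lemma collapses to the single rational identity for $A^j_l, A^j_r$ displayed above, which is routine. I would also remark that this identity is precisely the algebraic shadow of the fact that the extremal projector annihilates $e_\al$-descendants, which is the conceptual reason the chains were defined this way; but no such machinery is needed for the proof itself.
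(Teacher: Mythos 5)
Your proposal is correct and follows essentially the same route as the paper: apply $\prt_{lr}$ termwise to each chain using its three defining cases, and reduce the vanishing to the rational identity $-q^{-\eta_{lj}+\eta_{rj}}A_{l}^{j}+[\eta_{lj}-\eta_{rj}]_qA_{l}^{j}A_{r}^{j}+q^{\eta_{lj}-\eta_{rj}}A_{r}^{j}=0$ for the 3-chain and to $[h_\al]_qA_{l}^{j}+q^{h_\al}=0$ (using $\eta_{lj}=h_\al$ from Lemma \ref{thetas}) for the 2-chain. The sign bookkeeping you flag in the endpoint case $r=j$ works out exactly as you computed ($[\eta_{lj}]_qA_l^j=-q^{\eta_{lj}}$), so no further checking is needed.
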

\begin{proof}
Applying $\partial_{lr}$ to the 3-chain in (\ref{chain}), we get
$$
f_{(\vec\ell,l)}f_{(r,\vec\rho)}(-q^{-\eta_{lj}+\eta_{rj}}A_{l}^{j}+[\eta_{lj}-\eta_{rj}]_qA_{l}^{j}A_{r}^{j}+q^{\eta_{lj}-\eta_{rj}}A_{r}^{j}).
$$
The factor in the brackets turns zero on substitution of \ref{A_r}.

Now apply $\partial_{lj}$ to the right expression in (\ref{chain}) and get
$$
f_{(\vec\ell,l)}([h_\al]_qA_{l}^{j}+q^{h_\al})
=f_{(\vec\ell,l)}(\frac{q^{h_\al}-q^{-h_\al}}{q^{-2\eta_{lj}}-1}+q^{h_\al})
=f_{(\vec\ell,l)}\frac{[h_\al-\eta_{lj}]_q}{[-\eta_{lj}]_q}=0,
$$
so long as $\eta_{lj}=h_\al$ by Lemma \ref{thetas}.
\end{proof}

Given a route $\vec m=(m_1,\ldots,m_s)$, put
 $A_{\vec m}^{j}= A_{m_1}^{j}\cdots  A_{m_s}^{j}\in \hat U_q(\h)$ (and $A_{\vec m}^{j}=1$ for the empty route) and define
\be
z_{-j+1}=\sum_{1< \vec m < j}f_{(1,\vec m,j)} A_{\vec m}^{j} \in \hat U_q(\b_-),\quad j=2,\ldots,N,
\label{dyn_f}
\ee
 where the summation is taken over all possible $\vec m$
subject to the specified inequalities  plus the empty route.

\begin{propn}
\label{negative}
$
e_{\al} z_{-j}=
0 \mod
\hat U_q(\g)e_\al
$
for all $\al \in \Pi^+_{\g'}$ and $j=1,\ldots,N-1$.
\end{propn}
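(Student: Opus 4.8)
The plan is to run the statement through Lemma~\ref{partial}. Fix $\al\in\Pi^+_{\g'}$ and write $J=j+1\in\{2,\dots,N\}$ for the right endpoint of the routes occurring in $z_{-j}=\sum_{1<\vec m<J}f_{(1,\vec m,J)}A_{\vec m}^{J}$, so that (with the projection $p$ suppressed, as in the paper) $z_{-j}\in\Phi_{1J}$ and the operators $\partial_{lr}$ are those attached to the endpoint $J$. Lemma~\ref{partial} then gives
$$
e_\al z_{-j}\;\equiv\;\sum_{(l,r)\in P(\al)}\partial_{lr}\,z_{-j}\ \mod\ \hat U_q(\g)e_\al ,
$$
so it suffices to prove that $\partial_{lr}z_{-j}=0$ in $\hat U_q(\b_-)$ for every simple pair $(l,r)\in P(\al)$. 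The decisive point is that $\al\neq\al_1$, hence $1<l<r$ for every such pair; in particular the common left endpoint $1$ of all routes in $z_{-j}$ is never one of $l,r$, so a route can meet $\{l,r\}$ only strictly beyond its initial index.

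Fix a pair $(l,r)\in P(\al)$. If $r>J$, then no route of $z_{-j}$ contains $r$ and any route through $l$ must leave $l$ for an index still $\le J<r$; in all these cases $\partial_{lr}$ kills every term of $z_{-j}$, and we are done. Assume $r\le J$. Relative to $(l,r)$ the routes of $z_{-j}$ fall into two kinds: those on which $\partial_{lr}$ vanishes directly by its definition --- namely a route meeting neither $l$ nor $r$, or a route passing from $l$ to an index strictly between $l$ and $r$ --- and those in \emph{separated form}, i.e.\ of one of the shapes $(\vec\ell,l,\vec\rho)$, $(\vec\ell,l,r,\vec\rho)$, $(\vec\ell,r,\vec\rho)$ with $\vec\ell<l$ and $r<\vec\rho$ (inner blocks possibly empty, $\vec\rho$ possibly reduced to $(J)$). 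Exploiting the factorization $A_{\vec m}^{J}=\prod_{k\in\vec m}A_{k}^{J}$, the commutativity of the $A_{k}^{J}$, and the right $\hat U_q(\h)$-linearity of $\partial_{lr}$, the separated-form terms can be regrouped: for each choice of outer data $\vec\ell$ (a route from $1$ with $\vec\ell<l$) and $\vec\rho$ (a route to $J$ with $r<\vec\rho$), the three associated terms of $z_{-j}$ assemble --- up to a common $\hat U_q(\h)$-factor built from the portions of the route below $l$ and above $r$ --- into exactly one $(l,r)$-chain of (\ref{chain}): a $3$-chain when $r<J$, and, when $r=J$ (so the route necessarily ends at $r$), the two-term chain of (\ref{chain}). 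By Lemma~\ref{open-right} each of these chains is annihilated by $\partial_{lr}$, so $\partial_{lr}z_{-j}=0$.

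The regrouping itself, and the check that every term of every chain actually occurs in $z_{-j}$ with the coefficient dictated by (\ref{dyn_f}), are routine. The step that needs genuine care --- and the main obstacle --- is the dichotomy above: one must verify that a route meeting $l$ or $r$ is either in separated form or is of the kind on which $\partial_{lr}$ vanishes by definition. This is automatic whenever $r=l+1$, so it requires attention only when $r-l\ge2$, which by the explicit form of the natural representation $\pi$ (equivalently by Proposition~\ref{expl_f}) occurs only for $\g=\s\o(2n)$ with $\al=\al_n$, where $P(\al_n)=\{(n-1,n+1),(n,n+2)\}$. For the pair $(n,n+2)$ the only intermediate index $n+1$ can never follow $n$ in a route, since $f_{n,n+1}=0$; for $(n-1,n+1)$ the only intermediate index $n$ can follow $n-1$ solely via the Chevalley matrix entry $f_{n-1,n}=f_{n-1}$, and these are exactly the routes on which $\partial_{n-1,n+1}$ vanishes. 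Combined with $\check R_{pq}=0$ for $p>q$, this disposes of all remaining configurations. As the argument treats each of the (at most two) pairs $(l,r)\in P(\al)$ separately and yields $\partial_{lr}z_{-j}=0$ in every case, summing over $(l,r)\in P(\al)$ gives $e_\al z_{-j}\equiv 0\ \mod\ \hat U_q(\g)e_\al$ for all $\al\in\Pi^+_{\g'}$.
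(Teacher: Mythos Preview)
Your argument is correct and follows essentially the same route as the paper's proof: reduce via Lemma~\ref{partial} to the operators $\partial_{lr}$, then regroup the surviving summands of $z_{-j}$ into $(l,r)$-chains and invoke Lemma~\ref{open-right}. Your treatment is in fact more explicit than the paper's, particularly in verifying for $\g=\s\o(2n)$, $\al=\al_n$, that routes with an index strictly between $l$ and $r$ contribute nothing (using $f_{n,n'}=0$ and $[e_{\al_n},f_{n-1}]=0$); the paper leaves this implicit in the definition of $\partial_{lr}$ and the proof of Lemma~\ref{partial}.
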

\begin{proof}
Thanks to Lemma \ref{partial}, we can reduce consideration to the action of operators $\prt_{lr}$,
with $(l,r)\in P(\al)$. According to the definition of $\prt_{lr}$
the summands in (\ref{dyn_f}) that survive the action of $\prt_{lr}$ can be organized into
a linear combination of $(l,r)$-chains with coefficients in $\hat U_q(\h)$. By Lemma \ref{open-right} they are killed by $\prt_{lr}$.
\end{proof}
The elements  $z_{-i}$, $i=1,\ldots,N-1$,
belong to the normalizer $\Nc$ and form the set of negative generators of $Z_q(\g,\g')$
for symplectic $\g$. In the orthogonal case, the negative part of $Z_q(\g,\g')$ is generated by $z_{-i}$, $i=1,\ldots,N-2$.
\subsection{Raising operators}
In this section we construct positive generators of $Z_q(\g,\g')$, which are called raising
operators.
Consider an algebra automorphism $\omega\colon U_q(\g)\to U_q(\g)$ defined on the generators
by $f_\al\leftrightarrow e_\al$, $q^{\pm h_\al}\mapsto q^{\mp h_\al}$. For  $i<j$,  let $g_{ji}$ be the image of $f_{ij}$ under this isomorphism. The natural representation  restricted to $U_q(\g_\pm)$ intertwines $\omega$ and matrix transposition.
Since $(\omega\tp \omega)(\check{\Ru})=\check\Ru_{21}$, the matrix  $\check{R}^+=(\pi\tp \id)(\check\Ru_{21})$ is equal to $1\tp 1+(q-q^{-1})\sum_{i<j}e_{ji}\tp g_{ji}$.
\begin{lemma}
For all $\al\in \Pi^+_{\g'}$ and all $i>1$,
$
e_\al g_{i1}=\sum_{(l,r)\in P(\al)}\dt_{il} g_{r1} \mod \hat  U_q(\g)e_\al
$.
\label{[e,g]}
\end{lemma}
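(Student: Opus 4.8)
The statement is the "raising-operator" analogue of Proposition~\ref{non-dyn_f_ij}, transported along the automorphism $\omega$, and I would prove it by combining the commutation relations already established for the $f_{ij}$ with the behaviour of $\omega$ on the $e_\al$. Concretely, $g_{i1}=\omega(f_{1i})$ by definition, and the defining relations of $U_q(\g)$ show that $\omega$ sends $e_\al\mapsto f_\al$ and $q^{\pm h_\al}\mapsto q^{\mp h_\al}$. So I would start from the identity of Proposition~\ref{non-dyn_f_ij} with the roles of $i,j$ specialized to the pair $(1,i)$ (note $\ve_1-\ve_i\notin\Pi^+$ once $i>1$, except we must momentarily also handle $i=2$ separately, where $\ve_1-\ve_2\in\Pi^+$ and the relation is the Chevalley one), namely
$$
[f_{\al},f_{1i}]=\sum_{(l,r)\in P(\al)}\bigl(f_{1l}\dt_{ir}q^{h_{\al}}-q^{-h_{\al}}\dt_{1l}f_{ri}\bigr),
$$
and apply $\omega$ to both sides.

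Applying $\omega$ turns the left side into $[e_\al,g_{i1}]$ up to a sign and turns $q^{\pm h_\al}$ into $q^{\mp h_\al}$, so I get
$$
[e_\al,g_{i1}]=\sum_{(l,r)\in P(\al)}\bigl(g_{l1}\dt_{ir}q^{-h_{\al}}-q^{h_{\al}}\dt_{1l}g_{1r}\bigr).
$$
Since we are working modulo $\hat U_q(\g)e_\al$ (a right ideal), the term $g_{i1}e_\al$ coming from rewriting $[e_\al,g_{i1}]=e_\al g_{i1}-g_{i1}e_\al$ is discarded, so $e_\al g_{i1}\equiv [e_\al,g_{i1}]+g_{i1}e_\al\equiv[e_\al,g_{i1}]$; I also need to push the Cartan factors $q^{\mp h_\al}$ past $e_\al$ or absorb them — but since on the right-hand side the only surviving contributions come with a delta forcing $l=i$ in the first sum and forcing a factor with first index $1$ in the second, and because for $i>1$ the pairs $(l,r)\in P(\al)$ with $l=1$ would produce $g_{1r}$ which is not one of our lowering-type entries (indeed $g_{1r}$ has the wrong index order: $g_{ji}$ is defined for $i<j$, so $g_{1r}$ with $r>1$ is not defined and the term must actually vanish), the second sum drops out entirely, leaving exactly $\sum_{(l,r)\in P(\al)}\dt_{il}g_{r1}$ after commuting the Cartan element through, which acts trivially mod $\hat U_q(\g)e_\al$ up to an invertible scalar in $\hat U_q(\h)$ that can be absorbed.

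The step I expect to need the most care is this last bookkeeping: checking that the "$\dt_{1l}$" term on the $\omega$-image genuinely contributes nothing (this should follow because $\al\in\Pi^+_{\g'}$ is a root of $\g'$, hence $\ve_1$ is never the larger index in a simple pair $(l,r)\in P(\al)$ — $1\notin\vec m$ for such $\al$), and tracking the exact power of $q^{h_\al}$ so that the final coefficient is really $1$ rather than some $q$-power; by Lemma~\ref{thetas}(iii)-type reasoning or by direct inspection of weights, $q^{-h_\al}$ acting on the weight space of $g_{r1}$ reduces to $1$ modulo $\hat U_q(\g)e_\al$ in the relevant range. An equivalent and perhaps cleaner route, which I would mention as an alternative, is to bypass $\omega$ and derive the relation directly from the intertwining identity $(1\tp e_\al)\check{\Ru}_{21}-\check{\Ru}_{21}(1\tp e_\al)=\check{\Ru}_{21}(e_\al\tp q^{h_\al})-(e_\al\tp q^{-h_\al})\check{\Ru}_{21}$ applied to $\check R^+=1\tp1+(q-q^{-1})\sum_{i<j}e_{ji}\tp g_{ji}$, reading off the $e_{i1}$-component and using that $\pi(e_\al)$ is strictly upper triangular so that many matrix products vanish — this is the mirror of the proof of Proposition~\ref{non-dyn_f_ij} and involves no new ideas.
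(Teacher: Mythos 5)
Your closing ``alternative route'' is in fact the paper's proof: the stated justification is simply that the lemma follows from the intertwining property of the R-matrix, i.e.\ one reads off the $(i,1)$ entry of the identity obtained from (\ref{intertiner}) for $x=e_\al$ applied to $\check\Ru_{21}$, using $\check R^+=1\tp 1+(q-q^{-1})\sum_{i<j}e_{ji}\tp g_{ji}$ and the fact that $\pi(e_\al)_{r1}=0$ because no $\al\in\Pi^+_{\g'}$ has a nonzero $\ve_1$-component. That argument is sound and should be your main (indeed your only) argument. The primary route you propose does not work as written. Proposition \ref{non-dyn_f_ij} is a formula for $[e_\al,f_{ij}]$, not for $[f_\al,f_{1i}]$; since $\omega$ exchanges $e_\al$ and $f_\al$, applying $\omega$ to the genuine Proposition \ref{non-dyn_f_ij} yields a formula for $[f_\al,g_{i1}]$, which is of no use here. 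The identity you quote is not the one in the paper, and a weight count shows it cannot hold: $[f_\al,f_{1i}]$ has weight $-\al+\ve_i-\ve_1$, whereas the term $f_{1l}\dt_{ir}q^{h_\al}$ with $\ve_l-\ve_i=\al$ has weight $\al+\ve_i-\ve_1$. What the $\omega$-route actually needs is the companion relation $f_\al f_{1i}\equiv\sum_{(l,r)\in P(\al)}\dt_{il}f_{1r}$ modulo the left ideal $\hat U_q(\g)f_\al$ --- a relation inside $\hat U_q(\b_-)$ that the paper never states. It does follow from the recursions of Proposition \ref{expl_f} (for instance $f_{1,j+1}=[f_j,f_{1j}]_{\bar q}$ gives $f_jf_{1j}=f_{1,j+1}+\bar q\, f_{1j}f_j$), after which $\omega$ carries $\hat U_q(\g)f_\al$ onto $\hat U_q(\g)e_\al$; but you would have to prove that relation first, which is essentially the same amount of work as the direct R-matrix argument.

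The final bookkeeping step is also unjustified. First, $\hat U_q(\g)e_\al$ is the left ideal generated by $e_\al$, not a right ideal (your actual use of $g_{i1}e_\al\in\hat U_q(\g)e_\al$ is nevertheless correct). Second, a Cartan factor $q^{\mp h_\al}$ standing next to $g_{r1}$ is \emph{not} congruent to $1$ modulo this ideal: $g_{r1}q^{-h_\al}-g_{r1}$ is a nonzero element of $\hat U_q(\b_+)$ containing no factor of $e_\al$, so it cannot be ``absorbed''. The clean coefficient $1$ in the lemma has to emerge from the computation itself; e.g.\ for $i=2$ and $\al=\al_2$ one has $g_{21}=e_1$, $g_{31}=[e_2,e_1]_{\bar q}$, and $e_2e_1=g_{31}+\bar q\,e_1e_2\equiv g_{31}$, with no residual sign or power of $q$. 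I recommend discarding the $\omega$-detour and writing out only the intertwining computation.
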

\begin{proof}
Follows from the intertwining property of the R-matrix.
\end{proof}
Consider the right $\hat U_q(\h)$-module $\Psi_{i1}$ freely generated by $f_{(\vec m,k)}g_{k1}$ with $i\leqslant\vec m< k$.
We define  operators $\prt_{lr}\colon \Psi_{i1}\to \hat U_q(\g)$ similarly as we did it for $\Phi_{1j}$.
For a simple pair $(l,r)\in P(\al)$, put
$$
\partial_{l,r}f_{(\vec m,k)}g_{k1}=
\left\{
\begin{array}{rrrrr}
f_{(\vec m,l)}g_{r1},&l =k,\\
\bigl(\prt_{l,r}f_{(\vec m,k)}\bigr)g_{k1},&l\not =k,
\end{array}
\right.
 \quad i\leqslant \vec m< r.
$$
The Cartan factors appearing in $\prt_{lr}f_{(\vec m,k)}$ depend on $h_\al$. When pushed to the right-most position,
$h_\al$ is shifted by $(\al,\ve_1-\ve_r)$.
We extend $\prt_{lr}$ to an action on $\Psi_{i1}$ by the requirement that $\prt_{lr}$ commutes with the right action of $\hat U_q(\h)$. Let $p$ denote the natural homomorphism of $\hat U_q(\h)$-modules,
$p\colon \Psi_{i1}\to \hat U_q(\g)$.
One can prove the following analog of Lemma \ref{partial}.
\begin{lemma}
\label{partial_+}
For all $\al \in \Pi^+_{\g'}$  and all $x \in \Psi_{i1}$,
$e_{\al}\circ p(x)=\sum_{(l,r)\in P(\al)} \prt_{lr} x \mod \hat U_q(\g)e_\al$.
\end{lemma}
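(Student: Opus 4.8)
The plan is to rerun, almost verbatim, the computation behind Lemma \ref{partial}; the only genuinely new ingredient is Lemma \ref{[e,g]}, which records how $e_\al$ passes the rightmost factor $g_{k1}$. Both sides of the asserted identity are right $\hat U_q(\h)$-linear modulo $\hat U_q(\g)e_\al$ (by construction $\prt_{lr}$ commutes with the right $\hat U_q(\h)$-action, while on the left $e_\al$ past a Cartan element only produces another right multiple of $e_\al$), so it suffices to test it on one free generator $x=f_{(\vec m,k)}g_{k1}$ with $i\leqslant\vec m<k$. First I would simply move $e_\al$ to the right of the $f$-route, writing $e_\al\circ p(x)=[e_\al,f_{(\vec m,k)}]\,g_{k1}+f_{(\vec m,k)}\,(e_\al g_{k1})$, and then treat the two summands separately.

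For the second summand, Lemma \ref{[e,g]} gives $e_\al g_{k1}\equiv\sum_{(l,r)\in P(\al)}\dt_{kl}\,g_{r1}$ modulo $\hat U_q(\g)e_\al$, and left multiplication by $f_{(\vec m,k)}$ preserves this congruence since $\hat U_q(\g)e_\al$ is a left ideal; hence it reduces to $\sum_{(l,r):\,l=k}f_{(\vec m,l)}g_{r1}$, which is exactly the $l=k$ branch in the definition of $\prt_{lr}$ on $\Psi_{i1}$. For the first summand I would expand $[e_\al,f_{(\vec m,k)}]=\sum_{p}f_{m_1m_2}\cdots[e_\al,f_{m_pm_{p+1}}]\cdots f_{m_{s-1}k}$ and substitute into each slot the commutation rule for that step — Proposition \ref{non-dyn_f_ij} when $\ve_{m_p}-\ve_{m_{p+1}}\notin\Pi^+$, and the Chevalley relation (which yields $[h_\al]_q$ if $\ve_{m_p}-\ve_{m_{p+1}}=\al$ and $0$ otherwise) when the step is simple — exactly as in the proof of Lemma \ref{partial}: a pair $(l,r)\in P(\al)$ with $r=m_{p+1}$ contracts the $p$-th step to $f_{m_p l}\,q^{h_\al}$, a pair with $l=m_p$ produces $-q^{-h_\al}f_{r\,m_{p+1}}$, and entries $f_{ij}$ with $i>j$ (resp. $i=j$) are read as $0$ (resp. $1$). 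The one difference with the $\Phi_{1j}$ situation is that each Cartan factor $q^{\pm h_\al}$ so produced must be carried not only past the surviving route tail $f_{(r,\ldots,k)}$ but also past $g_{k1}$; moving a Cartan element past a weight vector rescales it, and since $\wt f_{(r,\ldots,k)}=\ve_k-\ve_r$ and $\wt g_{k1}=\ve_1-\ve_k$, the exponent $h_\al$ ends up shifted to $h_\al+(\al,\ve_1-\ve_r)$, which by Lemma \ref{thetas} ii) (with its index $i$ specialised to $1$), resp. iii), is precisely $\eta_{l1}-\eta_{r1}$, resp. $h_\al$ in the degenerate case $r=k$. Thus the first summand reproduces $\sum_{(l,r):\,l\neq k}(\prt_{lr}f_{(\vec m,k)})g_{k1}$ — the operator $\prt_{lr}$ of Section 3.1 applied to $f_{(\vec m,k)}$ with the upper index of every $\eta$ recomputed relative to $1$, which is exactly the $l\neq k$ branch of the definition (the shift of $h_\al$ by $(\al,\ve_1-\ve_r)$ being precisely what makes the two descriptions agree). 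Adding the two summands and discarding the $\hat U_q(\g)e_\al$-part gives $\sum_{(l,r)\in P(\al)}\prt_{lr}x$.

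The step I expect to be the main obstacle is nothing conceptual but the bookkeeping of the Cartan coefficients: one has to check that the combined shift through $f_{(r,\ldots,k)}$ and through $g_{k1}$ is exactly the one absorbed into the definition of $\prt_{lr}$ on $\Psi_{i1}$ — equivalently, that applying the shift $(\al,\ve_1-\ve_k)$ to $\eta_{lk}-\eta_{rk}$ produces $\eta_{l1}-\eta_{r1}$, which is Lemma \ref{thetas} ii) with $i=1$ — and to handle separately the boundary slot $p=s-1$, where the contracted step abuts $g_{k1}$ and where a pair $(l,r)$ with $r=k$ produces a term $f_{(\ldots,l)}g_{k1}$ that looks like, but must be kept distinct from, the contribution already handled via Lemma \ref{[e,g]}. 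Once these points are straightened out, the remaining manipulations are word for word those in the proof of Lemma \ref{partial}.
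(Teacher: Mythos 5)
Your proposal is correct and is precisely the computation the paper has in mind (its proof is literally the single word ``Straightforward''): split $e_\al\, f_{(\vec m,k)}g_{k1}$ into the commutator with the route plus the action on $g_{k1}$, use Lemma \ref{[e,g]} for the latter ($l=k$ branch) and Proposition \ref{non-dyn_f_ij} as in Lemma \ref{partial} for the former ($l\neq k$ branch), with the Cartan exponent shifted by $(\al,\ve_1-\ve_r)$ exactly as the paper states just before the lemma. Your bookkeeping of that shift via Lemma \ref{thetas} ii) with $i=1$ is the right check, so nothing is missing.
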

\begin{proof}
Straightforward.
\end{proof}
\noindent
We suppress the symbol of projection $p$ to simplify the formulas.

Define $\si_i$ for all $i=1,\ldots,N$ as follows. For $i<j$ let $|\!\!|i-j|\!\!|$ be the number of simple positive roots
entering $\ve_i-\ve_j$. For all $i,k= 2,\ldots, N$, $i< k$, put
$$
A_k^i=\frac{q^{\eta_{k1}-\eta_{i1}}}{[\eta_{i1}-\eta_{k1}]_q}, \quad B_k^i=\frac{(-1)^{|\!\!|i-k|\!\!|}}{[\eta_{i1}-\eta_{k1}]_q},
$$
For each $(l,r)\in \P(\al)$, where $\al \in \Pi^+_{\g'}$, define 3-chains  as
\be
f_{(i,\vec m, l)}g_{l1}B_{l}^i
+
f_{(i,\vec m, l)}f_{(l,r)}g_{r1}A_{l}^iB_{r}^i
+
f_{(i,\vec m, r)}g_{r1}B_{r}^i,
\label{3-chain_AB}
\ee
with $i< \vec m< l< r \leqslant N$ and
\be
f_{(i,\vec \ell, l)}f_{(l,\vec \rho, k)}g_{k1}A_{l}^i
+
f_{(i,\vec \ell, l)}f_{(l,r)}f_{(r,\vec \rho, k)}g_{k1}A_{l}^iA_{r}^i
+
f_{(i,\vec \ell, r)}f_{(r,\vec \rho, k)}g_{k1}A_{r}^i
\label{3-chain_AA}
\ee
with
$i< \vec \ell <  l  < r < \vec \rho <  k\leqslant N$.
The 2-chains are defined as
\be
g_{i1}+f_{(i,r)}g_{r1}B_{r}^i, \quad
 f_{(i,\vec m, k)}g_{k1}+f_{(i,r)}f_{(r,\vec m, k)}g_{k1}A_{r}^i
 \label{2-chain AB}
\ee
where $r$ is such that $\ve_i-\ve_r\in \Pi^+_{\g'}$
and $i< r< \vec m< k \leqslant N$. In all cases the empty routes $\vec m$ are admissible.
\begin{lemma}
\label{chains_killed}
For all $\al \in \Pi^+_{\g'}$ and all $(l,r)\in P(\al)$ the $(l,r)$-chains are
annihilated by $\prt_{lr}$.
\end{lemma}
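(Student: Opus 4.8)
The plan is to follow the template of the proof of Lemma~\ref{open-right}, now carrying the extra right leg $g_{k1}$. Fix $\al\in\Pi^+_{\g'}$ and a simple pair $(l,r)\in P(\al)$, so $\al=\ve_l-\ve_r$, and apply $\prt_{lr}$ termwise to each of the chains~(\ref{3-chain_AB}), (\ref{3-chain_AA}), (\ref{2-chain AB}). Recall the piecewise definition of $\prt_{lr}$ on $\Psi_{i1}$: on a summand $f_{(\vec m,k)}g_{k1}$ with $k=l$ the operator hits the $g$-leg and, by Lemma~\ref{[e,g]}, merely replaces $g_{l1}$ by $g_{r1}$, introducing no Cartan factor; on a summand with $k\neq l$ it acts on the $f$-leg exactly as the operator $\prt_{lr}$ on $\Phi_{1k}$, emitting one of the Cartan factors $q^{\pm(\eta_{lk}-\eta_{rk})}$ or $[\eta_{lk}-\eta_{rk}]_q$ of Lemma~\ref{partial}. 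Each such Cartan factor is then transported to the right-most position past the surviving $f$-entries and past $g_{k1}$, which by the rule stated before Lemma~\ref{partial_+} shifts $h_\al$ by $(\al,\ve_1-\ve_r)$; combined with Lemma~\ref{thetas}(i)--(ii) this converts $\eta_{lk}-\eta_{rk}$ into $\eta_{l1}-\eta_{r1}$, while in the cases where a route terminates exactly on $r$ one uses $\eta_{lr}=h_\al$ from Lemma~\ref{thetas}(iii). After this normalization every chain maps to a single monomial, of the form $f_{\vec m}g_{k1}$ or $g_{r1}$, times a scalar in $\hat U_q(\h)$, and it remains to check that the scalar vanishes.

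For the interior $3$-chain~(\ref{3-chain_AA}), with $i<\vec\ell<l<r<\vec\rho<k$, the three terms all land on $f_{(i,\vec\ell,l)}f_{(r,\vec\rho,k)}g_{k1}$ and leave the scalar $-q^{-\vt}A_l^i+[\vt]_qA_l^iA_r^i+q^{\vt}A_r^i$ with $\vt=\eta_{l1}-\eta_{r1}$; once the definitions of $A_l^i$ and $A_r^i$ are inserted this vanishes identically, by the same substitution-and-telescoping computation as for the $3$-chain in Lemma~\ref{open-right}. The interior $2$-chain $f_{(i,\vec m,k)}g_{k1}+f_{(i,r)}f_{(r,\vec m,k)}g_{k1}A_r^i$ of~(\ref{2-chain AB}) is an $(l,r)$-chain precisely when $l=i$ and $\ve_i-\ve_r\in\Pi^+_{\g'}$; then $\prt_{ir}$ sends both terms to $f_{(r,\vec m,k)}g_{k1}$ with scalar $-q^{-\vt}+[\vt]_qA_r^i$, $\vt=\eta_{i1}-\eta_{r1}$, which is $0$ since $[\vt]_qA_r^i=q^{-\vt}$.

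The genuinely new case is the right-boundary $3$-chain~(\ref{3-chain_AB}), with $i<\vec m<l<r$: here $\prt_{lr}$ hits the $g$-leg of the first term, sending $g_{l1}\mapsto g_{r1}$ with coefficient $B_l^i$, and the $f$-leg of the other two, whose routes end on $r$ so that Lemma~\ref{thetas}(iii) gives $\eta_{lr}=h_\al$; all three produce $f_{(i,\vec m,l)}g_{r1}$, with scalars $B_l^i$, $[\eta_{l1}-\eta_{r1}]_qA_l^iB_r^i$ and $q^{\eta_{l1}-\eta_{r1}}B_r^i$ (the bracket $[h_\al]_q$ and the power $q^{h_\al}$ emitted on the $f$-leg having been turned into these by the shift past $g_{r1}$). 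Using $\ve_l-\ve_r=\al$ to relate the signs $(-1)^{|\!\!|i-l|\!\!|}$ and $(-1)^{|\!\!|i-r|\!\!|}$ in $B_l^i$ and $B_r^i$, the vanishing of the sum reduces once more to the elementary identity $[z]_q=(q^z-q^{-z})/(q-q^{-1})$. The short $2$-chain $g_{i1}+f_{(i,r)}g_{r1}B_r^i$ of~(\ref{2-chain AB}) is the degenerate version of this with $\vec m$ and the $f$-part trivial: $\prt_{ir}$ turns it into $g_{r1}\bigl(1+[\eta_{i1}-\eta_{r1}]_qB_r^i\bigr)$, which is $0$ by Lemma~\ref{thetas}(ii)--(iii) together with $|\!\!|i-r|\!\!|=1$.

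The only real difficulty is the bookkeeping: one must commute each Cartan factor emitted by $\prt_{lr}$ all the way to the right, past the remaining $f_{ab}$'s \emph{and} past $g_{k1}$, so that the $\eta_{\bullet k}$-differences coming out of the $\Phi_{1k}$-rule become the $\eta_{\bullet 1}$-differences hard-wired into $A_k^i$ and $B_k^i$ — this is exactly the purpose of the shift by $(\al,\ve_1-\ve_r)$ — and one must confirm that the sign factors $(-1)^{|\!\!|i-k|\!\!|}$ and the $q^{\dt_{ij}}$'s in the explicit formulas of Proposition~\ref{expl_f} do not interfere, which they do not since $\prt_{lr}$ only ever deletes the single entry $f_{(l,r)}$ from a route or shortens its last step.
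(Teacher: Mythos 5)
Your proof is correct and follows essentially the same route as the paper's: apply $\prt_{lr}$ termwise to each chain, push the emitted Cartan factors to the right-most position (converting the $\eta_{\bullet k}$-differences into $\eta_{\bullet 1}$-differences via the shift by $(\al,\ve_1-\ve_r)$ and Lemma \ref{thetas}), and verify that the resulting scalar coefficients in $\hat U_q(\h)$ vanish upon substituting the definitions of $A^i_{\bullet}$ and $B^i_{\bullet}$. The sign relation between $B^i_l$ and $B^i_r$ coming from $\ve_l-\ve_r=\al$ being simple is exactly the point the paper's computation relies on as well, so no gap.
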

\begin{proof}
Suppose that $i=l$ and apply $\prt_{ir}$ to the left 2-chain in (\ref{2-chain AB}). The result is
$$
g_{r1}+[h_\al]_qg_{r1}B^{i}_r
=g_{r1}(1+[h_{\al} +(\al,\ve_1-\ve_r)]_qB_{r}^i)=g_{r1}(1+[\eta_{i1}-\eta_{r1}]_qB_{r}^i)=0,
$$
by Lemma \ref{thetas}.
Applying $\prt_{ir}$ to the  right  2-chain  in (\ref{2-chain AB}) we get
$$
f_{(r,\vec m, k)}g_{k1}(-q^{-\eta_{i1}+\eta_{r1}}+[\eta_{i1}-\eta_{r1}]_qA_{r}^i)=0.
$$
Now consider 3-chains. The  action of $\prt_{lr}$ on the (\ref{3-chain_AA}) produces
\be
-f_{(i,\vec \ell, l)}q^{-h_\al}f_{(r,\vec \rho, k)}g_{k,1}A_{l}^i
+
f_{(i,\vec \ell, l)}[h_\al]_qf_{(r,\vec \rho, k)}g_{k,1}A_{l}^iA_{r}^i
+
f_{(i,\vec \ell, l)}q^{h_\al}f_{(r,\vec \rho, k)}g_{k,1}A_{r}^i,
\nn
\ee
which turns zero since
$
-q^{\eta_{r1}-\eta_{l1}}A_{l}^i
+
[\eta_{l1}-\eta_{r1}]_qA_{l}^iA_{r}^i
+
q^{\eta_{l1}-\eta_{r1}}A_{r}^i=0
$.
The action of $\prt_{lr}$ on (\ref{3-chain_AB}) yields
$$
f_{(i,\vec m, l)}g_{r1}B_{l}^i
+
f_{(i,\vec m, l)}[h_\al]g_{r1}A_{l}^iB_{r}^i
+
f_{(i,\vec m, l)}q^{h_\al}g_{r1}B_{r}^i.
$$
This is vanishing since
$
B_{l}^i
+
[\eta_{l1}-\eta_{r1}]A_{l}^iB_{r}^i
+
q^{\eta_{l1}-\eta_{r1}}B_{r}^i
=
B_{l}^i
+
\frac{[\eta_{i1}-\eta_{r1}]_q}{[\eta_{i1}-\eta_{l1}]_q}B_{r}^i=0.
$
\end{proof}
Given a route $\vec m = (m_1,\ldots, m_k)$ such that $i<\vec m$ let $A_{\vec m}^i$ denote the product $A^i_{m_1}\ldots A^i_{m_k}$.
Introduce elements $z_{i}\in \hat U_q(\g_-)\g_+$ of weight $\ve_1-\ve_i$ by
$$
z_{i-1}=g_{i1}+\sum_{i< \vec m< k\leqslant N}f_{(i,\vec m,k)}g_{k1}A_{\vec m}^iB_k^i, \quad i=2,\ldots,N.
$$
Again, the summation includes empty $\vec m$.
\begin{propn}
$
e_{\al} z_{i}=
0 \mod
\hat U_q(\g)e_\al
$,
for all $\al \in \Pi^+_{\g'}$
and $i=1,\ldots,N-1$.
\label{positive}
\end{propn}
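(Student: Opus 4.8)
The plan is to repeat, essentially verbatim, the argument that proved Proposition~\ref{negative}, now carried out inside the module $\Psi_{i1}$ in place of $\Phi_{1j}$. First I would regard $z_{i-1}$ as the image under $p$ of an element of $\Psi_{i1}$: every summand $f_{(i,\vec m,k)}g_{k1}A_{\vec m}^iB_k^i$ with $i<\vec m<k\leqslant N$ is a free generator of $\Psi_{i1}$ times a Cartan coefficient, and the leading term $g_{i1}$ corresponds to the degenerate generator (empty route, $k=i$). Lemma~\ref{partial_+} then gives
$$
e_{\al}\,z_{i-1}=\sum_{(l,r)\in P(\al)}\prt_{lr}z_{i-1}\mod\hat U_q(\g)e_\al,
$$
so it suffices to show that $\prt_{lr}z_{i-1}=0$ for every simple pair $(l,r)\in P(\al)$ and every $\al\in\Pi^+_{\g'}$.

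Next I would fix such a pair $(l,r)$ and sort the summands of $z_{i-1}$ that are not annihilated outright by $\prt_{lr}$ according to the position of $\{l,r\}$ inside the route $(i,\vec m,k)$. Three regimes occur. (a) Both $l$ and $r$ lie strictly in the interior, $i<l<r<k$: each carries an $A$-weight, and for fixed outer data $(\vec\ell,\vec\rho,k)$ the three possibilities for the route (containing $l$ but not $r$, containing $l,r$ adjacent, containing $r$ but not $l$) assemble, up to a common $\hat U_q(\h)$-coefficient on the right, into a 3-chain of the form~(\ref{3-chain_AA}). (b) $r=k$ is the right endpoint: $l$ then carries an $A$-weight while the endpoint index carries a $B$-weight, the missing term being produced by the endpoint rule $\prt_{lr}\bigl(f_{(\vec m,l)}g_{l1}\bigr)=f_{(\vec m,l)}g_{r1}$, and the summands assemble into a 3-chain~(\ref{3-chain_AB}). (c) $l=i$, i.e.\ $\ve_i-\ve_r\in\Pi^+_{\g'}$: here the leading summand $g_{i1}$ and the summands whose route begins $(i,r,\ldots)$, together with their $r$-free partners, combine into the 2-chains~(\ref{2-chain AB}). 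In each regime one still has to check that the $\hat U_q(\h)$-coefficients coming out of $\prt_{lr}$ match those of the chain; this uses the fact that when the Cartan factor $q^{\pm h_\al}$ produced by $\prt_{lr}$ is carried to the far right past $g_{r1}$ (of weight $\ve_1-\ve_r$), its exponent $h_\al$ is shifted to an $\eta_{i1}-\eta_{r1}$-type expression by Lemma~\ref{thetas}; this is exactly why $A_k^i$ and $B_k^i$ were defined through $\eta_{\cdot 1}$ rather than through $\eta_{\cdot j}$. Lemma~\ref{chains_killed} now annihilates every chain, so $\prt_{lr}z_{i-1}=0$; summing over $(l,r)\in P(\al)$ gives $e_\al z_{i-1}=0\mod\hat U_q(\g)e_\al$, and relabelling $i-1$ as $i$ yields the statement for $i=1,\ldots,N-1$.

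The one substantive point is the combinatorial claim of the preceding paragraph: that regimes (a)--(c) are mutually exclusive and together account for every surviving summand of $z_{i-1}$, and that the chain coefficients come out correctly after the $h_\al$-shift. This is slightly more delicate than the corresponding step in the proof of Proposition~\ref{negative}, because the endpoint factor $g_{k1}$ breaks the left-right symmetry of $\Phi_{1j}$ and forces two species of Cartan weight: $A_k^i$ when $k$ is an interior index of the route, and $B_k^i$ when $k$ is its endpoint. I expect the genuine care to be concentrated at the two ends of a route: at $l=i$, where the route degenerates and the coefficient-free term $g_{i1}$ must be paired with its $B_r^i$-weighted partner, and at $r=k$, where $\prt_{lr}$ contracts against the factor $g_{k1}$ rather than against an internal $f_{m_p,m_{p+1}}$. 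Once these boundary conventions are fixed, the remainder is a routine transcription of the proof of Proposition~\ref{negative} via Lemmas~\ref{partial_+} and~\ref{chains_killed}.
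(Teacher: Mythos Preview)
Your proposal is correct and follows essentially the same route as the paper: reduce via Lemma~\ref{partial_+} to the vanishing of $\prt_{lr}z_{i-1}$, then arrange the surviving summands into the chains~(\ref{3-chain_AB})--(\ref{2-chain AB}) and invoke Lemma~\ref{chains_killed}. The only difference is that the paper first disposes of the extreme cases $i=1',2'$ (that is, $z_{N-1}=g_{1'1}$ and $z_{N-2}=g_{2'1}+f_1 g_{1'1}B$) by a direct appeal to Lemma~\ref{[e,g]}, since for these values of $i$ there is at most one nontrivial summand and no chain decomposition is needed; your uniform treatment absorbs these as degenerate instances of regime~(c).
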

\begin{proof}
By Lemma \ref{[e,g]},  the vectors $g_{2'1}$ and $z_{N-1}=g_{1'1}$ are normalizing the left ideal $\hat U_q(\g)\g'_+$,
so is $z_{N-2}=g_{2'1}+f_1g_{1'1}B^{1'}_{2'}$.
Once the cases $i=2',1'$ are proved, we  further assume $i<2'$.
In view of Lemma \ref{partial_+}, it is sufficient to show that $z_{i-1}$ is killed, modulo $\hat U_q(\g)\g'_+$,
by all $\prt_{lr}$ such that $\ve_l-\ve_r\in \Pi^+_{\g'}$. Observe that $z_{i-1}$ can be arranged into
a linear combination of chains, which are killed by  $\prt_{lr}$, as in Lemma \ref{chains_killed}.
\end{proof}
The elements  $z_{i}$, $i=1,\ldots,N-1$,
belong to the normalizer $\Nc$. They form the set of positive generators of $Z_q(\g,\g')$
for symplectic $\g$. In the orthogonal case, the positive part of $Z_q(\g,\g')$ is generated by $z_{i}$, $i=1,\ldots,N-2$.

\vspace{20pt}
{\bf Acknowledgements}.
This research is supported in part by the RFBR grant 12-01-00207-a.
We are grateful to the Max-Plank Institute for Mathematics in Bonn for hospitality and
excellent research atmosphere.

\end{document}